\author{Rodrigo T. Sato Mart\'in de Almagro
	\\[2mm]
	{\small  Instituto de Ciencias Matem\'aticas (CSIC-UAM-UC3M-UCM)} \\
	{\small C/Nicol\'as Cabrera 13-15, 28049 Madrid, Spain}}
\title{Convergence of Lobatto-type Runge-Kutta methods for partitioned differential-algebraic systems of index 2}
\tikzset{
  solid node/.style={circle,draw,inner sep=1.2,fill=black},
  hollow node/.style={circle,draw,inner sep=1.2},
  triangle node/.style={regular polygon, regular polygon sides=3,draw,fill=black!20,inner sep=1.2},
}
\def\maketag@@@#1{\hbox{\m@th\normalfont\normalsize#1}}
\newtheorem{theorem}{Theorem}[section]
\newtheorem{proposition}{Proposition}[section]
\newtheorem{corollary}{Corollary}[theorem]
\newtheorem{lemma}[theorem]{Lemma}
\theoremstyle{remark}
\newtheorem*{remark}{Remark}
\theoremstyle{definition}
\newcommand{\dsum}{\displaystyle\sum}
\begin{document}
\maketitle

\begin{abstract}
In this paper we propose a numerical scheme for partitioned systems of index 2 DAEs, such as those arising from nonholonomic mechanical problems and prove the order of a certain class of Runge-Kutta methods we call of Lobatto-type. The study of nonholonomic systems has  recently shown a new interest in that theory and also in its relation to the new developments in control theory, subriemannian geometry, robotics, etc. The proofs and general outline of the paper follow a similar procedure of the one by L.O. Jay  \cite{Jay93} in the non-partitioned setting, but we tackle the issue of having two different sets of coefficients in interaction.
\end{abstract}

\section{Introduction}
Let $N, M$ be smooth manifolds such that $M \subseteq N$. Assume that $\dim N = n$ and $\mathrm{codim} M = m$ and let $M$ be defined as the null-set of $\phi: N \to \mathbb{R}^m$. A generic explicit differential equation on $M$ can be recast into a semi-explicit index 2 differential algebraic equation (DAE) on $N$ taking the form:
\begin{equation}
\left\lbrace
\begin{array}{rl}
\dot{y} &= f(y,z)\\
0 &= \phi(y)
\end{array}\right.
\label{eq:general_system}
\end{equation}
where $y \in N$ and $z \in V$, with $V$ a vector space such that $\dim V = m$. Studies on the numerical solution of initial value problems (IVP) for such general systems on vector spaces can be found as part of the bibliography that serves as foundation for this paper, such as \cite{HaLuRo89} or \cite{Jay93}.

We are interested in a subset of such problems, which will be referred to as \emph{partitioned}, where $y = (q,p)$, $\dim Q = \dim P = n$ (thus in this case $\dim N = 2 n$), and $\lambda \in \mathbb{R}^m$.
\begin{equation}
\left\lbrace\begin{array}{rl}
\dot{q} &= f(q,p)\\
\dot{p} &= g(q,p,\lambda)\\
0 &= \phi(q,p)
\end{array}\right.
\label{eq:partitioned_system}
\end{equation}
Such is the case of the equations of motion of nonholonomic mechanical systems which motivates our study. Remember that nonholonomic equations are in Hamiltonian form 
	\begin{equation}
	\left\lbrace\begin{array}{rl}
	\dot{q}^i &={\displaystyle \vphantom{\sum_{j = 1}^N} \frac{\partial H}{\partial p_i}}\\
	\dot{p}_i &={\displaystyle \vphantom{\sum_{j = 1}^N} -\frac{\partial H}{\partial q^i}+\lambda_{\alpha} \mu^{\alpha}_i }\\
	0 &={ \displaystyle \vphantom{\sum_{j = 1}^N} \mu^{\alpha}_i\frac{\partial H}{\partial p_i} }
	\end{array}\right.
	\label{eq:partitioned_system-nonholonomic}
	\end{equation}
for a Hamiltonian function $H(q,p)$ and linear nonholonomic constraints $\mu_i^{\alpha}(q) \dot{q}^i=0$. An IVP for this partitioned DAE is defined by an initial condition $(q_0, p_0, \lambda_0) \in M \times \mathbb{R}^m$.

The development and application of the methods shown here in the case of nonholonomic mechanical systems will be the subject of a follow-up paper where numerical experiments will also be performed \cite{NonholonomicMartinSato18}.

For the remainder of the paper we will assume that $f$, $g$ and $\phi$ are sufficiently differentiable and that $\left(D_2 \phi D_3 g\right)(q,p,\lambda)$ remains invertible in a neighbourhood of the exact solution. Here $D_i$ means derivative with respect to the $i$-th argument.

\section{Lobatto-type methods}
A numerical solution of an IVP for \eqref{eq:general_system} can be found using a $s$-stage Runge-Kutta method with coefficients $(a_{i j}, b_{j})$. Writing the corresponding equations is a relatively trivial matter, taking the form:
\begin{subequations}
	\label{eq:RK_general_system}
	\begin{alignat}{2}
		y_1 &= y_0 + h \dsum_{j = 1}^s b_{j} k_j, \quad & z_1 &= z_0 + h \dsum_{j = 1}^s b_{j} l_j\\
		Y_i &= y_0 + h \dsum_{j = 1}^s a_{i j} k_j, \quad & Z_i &= z_0 + h \dsum_{j = 1}^s a_{i j} l_j\\
		k_i &= f(Y_i,Z_i), \quad & 0 &= g(Y_i) \vphantom{\dsum_{j = 1}^s}.
	\end{alignat}
\end{subequations}
Note that these $l_j$ are not given explicitly and must instead be solved for with the help from the constraint equations. In fact under some assumptions on the RK coefficients we may eliminate the equations for the $z$ and $Z$ variables completely.

Now, a numerical solution of an IVP for eqs.\eqref{eq:partitioned_system} can also be found using an $s$-stage \emph{partitioned} Runge-Kutta method but already the correct application of such a scheme is non-trivial. One could naively write:
\begin{subequations}
	\label{eq:RK_partitioned_system_naive}
	\begin{alignat}{3}
		q_1 &= q_0 + h \dsum_{j = 1}^s b_{j} V_j,\;\; & p_1 &= p_0 + h \dsum_{j = 1}^s \hat{b}_{j} W_j,\;\; & \lambda_1 &= \lambda_0 + h \dsum_{j = 1}^s \tilde{b}_j U_j,\\
		Q_i &= q_0 + h \dsum_{j = 1}^s a_{i j} V_j,\;\; & P_i &= p_0 + h \dsum_{j = 1}^s \hat{a}_{i j} W_j,\;\; & \Lambda_i &= \lambda_0 + h \dsum_{j = 1}^s \tilde{a}_{i j} U_j,\\
		V_i &= f(Q_i, P_i),\;\; & W_i &= g(Q_i, P_i, \Lambda_i),\;\; & 0 &= \phi(Q_i, P_i)\vphantom{\dsum_{j = 1}^s}.
	\end{alignat}
\end{subequations}
Again, $U_j$ are not given explicitly and, as above, in some cases, it may also be possible to eliminate the equations for $\lambda$ and $\Lambda$. Unfortunately such a system of equations may have certain issues, both from a solvability point of view and from a numerical convergence point of view. This is especially true for the particular case of partitioned Runge-Kutta methods that we will consider.

In \cite{Jay93} the author considers Runge-Kutta methods satisfying the hypotheses:
\begin{enumerate}[label=H\arabic*]
\item \label{itm:H1} $a_{1 j} = 0$ for $j = 1, ..., s$;
\item \label{itm:H2} the submatrix $\tilde{A} := (a_{i j})_{i,j \geq 2}$ is invertible;
\item \label{itm:H3} $a_{s j} = b_j$ for $j = 1, ..., s$ (the method is \emph{stiffly accurate}).
\end{enumerate}
\ref{itm:H1} implies that $c_1 =\sum_{j=1}^{s}a_{1j}= 0$ and for eqs. \eqref{eq:RK_general_system} $Y_1 = y_0$, $Z_1 = z_0$. \ref{itm:H3} implies that $y_1 = Y_s$, $z_1 = Z_s$. Furthermore, if the method is \emph{consistent}, i.e., $\sum_j b_j = 1$, then \ref{itm:H3} implies $c_s = 1$. For eqs. \eqref{eq:RK_partitioned_system_naive} if $(\tilde{a}_{i j},\tilde{b}_j)$ also satisfies the hypotheses, then $Q_1 = q_0$, $\Lambda_1 = \lambda_0$, $Q_s = q_1$ and $\Lambda_s = \lambda_1$. The most salient example of these methods is the \textbf{Lobatto IIIA}, which is a continuous collocation method.

The \textbf{Lobatto IIIB} is a family of discontinuous collocation methods which are symplectic conjugated to the IIIA methods. Two Runge-Kutta methods, $(a_{i j}, b_j)$ and $(\hat{a}_{i j}, \hat{b}_j)$, satisfying the \emph{compatibility condition} $\sum_{j = 1}^s \hat{a}_{i j}$ $= \hat{c_i} = c_i = \sum_{j = 1}^s a_{i j}$, are symplectic conjugated if they satisfy:
\begin{align}
&b_i \hat{a}_{i j} + \hat{b}_j a_{j i} = b_i \hat{b}_j \quad \text{for } i,j = 1,..., s\\
&b_j = \hat{b}_j \quad \text{for } j = 1,..., s
\end{align}
Together they form the \textbf{Lobatto IIIA-IIIB} family of \textbf{symplectic partitioned Runge-Kutta} methods which is precisely the one we want to study (see also \cite{NoWa81,HaLuWa2006}). 

Note that Lobatto IIIB methods do not satisfy any of the hypotheses above stated. In fact any symplectic conjugate method to a method satisfying those hypotheses must necessarily be such that:
\begin{enumerate}[label=H\arabic*']
\item \label{itm:H1'} $\hat{a}_{i s} = 0$ for $i = 1, ..., s$;
\item \label{itm:H2'} $\hat{a}_{i 1} = \hat{b}_1$ for $i = 1, ..., s$.
\end{enumerate}
Obviously the submatrix $\hat{\tilde{A}} := (\hat{a}_{i j})_{i,j \geq 2}$ is never invertible because of \ref{itm:H1'}, and this is the culprit of the solvability issues of \eqref{eq:RK_partitioned_system_naive}.

For such methods, we propose the following equations for the numerical solution of the partitioned IVP:

\begin{subequations}
	\label{eq:RK_partitioned_system}
	\begin{alignat}{2}
		q_1 &= q_0 + h \dsum_{j = 1}^s b_{j} V_j,\quad & p_1 &= p_0 + h \dsum_{j = 1}^s \hat{b}_{j} W_j,\\
		Q_i &= q_0 + h \dsum_{j = 1}^s a_{i j} V_j,\quad & P_i &= p_0 + h \dsum_{j = 1}^s \hat{a}_{i j} W_j,\\
		p_i &= p_0 + h \dsum_{j = 1}^s a_{i j} W_j,\quad & 0 &= \phi(Q_i, p_i),\\
		V_i &= f(Q_i, P_i),\quad & W_i &= g(Q_i, P_i, \Lambda_i) \vphantom{\dsum_{j = 1}^s}.
	\end{alignat}
\end{subequations}
together with $\Lambda_1 = \lambda_0$ and $\Lambda_s = \lambda_1$. It should be noted that, although similar, these methods do not generally coincide with the SPARK methods proposed by L. O. Jay in \cite{Jay09}.

There are several \emph{simplifying assumptions} that Runge-Kutta methods satisfy:
\begin{align*}
&B(p): \sum_{i = 1}^s b_i c_i^{k-1} = \frac{1}{k} \quad \text{for } k = 1,..., p\\
&C(q): \sum_{j = 1}^s a_{i j} c_j^{k-1} = \frac{c_i^k}{k} \quad \text{for } i = 1, ..., s,\; k = 1,..., q\\
&D(r): \sum_{i = 1}^s b_i c_i^{k-1} a_{i j} = \frac{b_j (1 - c_j^k)}{k} \quad \text{for } j = 1, ..., s,\; k = 1,..., r
\end{align*}

When referring to these assumptions for a Runge-Kutta method $(\hat{A},\hat{B})$ we will write them as $\hat{X}(\hat{y})$. Note that if $A$ and $\hat{A}$ are two symplectic conjugated methods, then $\hat{p} = p$, $C(q)$ implies $\hat{r} = q$, and conversely $D(r)$ implies $\hat{q} = r$.

Apart from these, there are a few more simplifying assumptions that pairs of compatible methods satisfy (see \cite{Jay96}):
\begin{align*}
&C\hat{C}(Q): \sum_{j = 1}^s \sum_{l = 1}^s a_{i j} \hat{a}_{j l} c_l^{k-2} = \frac{c_i^k}{k (k - 1)} \quad \text{for } i = 1, ..., s,\; k = 2,..., Q\\
&D\hat{D}(R): \sum_{i = 1}^s \sum_{j = 1}^s b_i c_i^{k-2} a_{i j} \hat{a}_{j l} = \frac{b_l}{k (k - 1)} \left[(k - 1) - (k c_l - c_l^k)\right]
&\tag*{for  $l = 1, ..., s$, $k = 2,..., R$\hspace{1.2cm}}
\end{align*}
\begin{align*}
&\hat{C} C(\hat{Q}): \sum_{j = 1}^s \sum_{l = 1}^s \hat{a}_{i j} a_{j l} c_l^{k-2} = \frac{c_i^k}{k (k - 1)} \quad \text{for } i = 1, ..., s,\; k = 2,..., \hat{Q}\\
&\hat{D} D(\hat{R}): \sum_{i = 1}^s \sum_{j = 1}^s \hat{b}_i c_i^{k-2} \hat{a}_{i j} a_{j l} = \frac{\hat{b}_l}{k (k - 1)} \left[(k - 1) - (k c_l - c_l^k)\right]
&\tag*{for  $l = 1, ..., s$, $k = 2,..., \hat{R}$\hspace{1.2cm}}
\end{align*}

It can be shown that if both methods are symplectic conjugated then, $Q = R = p - r$ and $\hat{Q} = \hat{R} = p - q$. In particular, Lobatto methods satisfy $B(2 s - 2)$, $C(s)$, $D(s-2)$, $\hat{B}(2 s - 2)$, $\hat{C}(s-2)$, $\hat{D}(s)$, as well as $C\hat{C}(s), D\hat{D}(s), \hat{C}C(s-2), \hat{D}D(s-2)$.

Before moving on there is a function associated to a Runge-Kutta method that we need to define. Consider the linear problem $\dot{y} = \lambda y$, and apply one step of the given method for an initial value $y_0$. The function $R(z)$ defined by $y_1 = R(h \lambda) y_0$ is the so-called stability function of the method.

For an arbitrary Runge-Kutta method we have that
\begin{equation*}
R(z) = 1 + z b (\mathrm{Id} - z A)^{-1} \mathds{1},
\end{equation*}
where $A = \left(a_{i j}\right)$, $b = (b_1,...,b_s)$ and $\mathds{1} = (1,...,1)^T$. In the particular case of a method satisfying hypothesis \ref{itm:H3} this can be reduced to:
\begin{equation*}
R(z) = e_s^T (\mathrm{Id} - z A)^{-1} \mathds{1},
\end{equation*}
where $e_i$ denotes an $s$-dimensional column vector whose entries are all zero except for its $i$-th entry which is 1.

%

\section{Existence, uniqueness and influence of perturbations}
\begin{theorem}
\label{thm:existence_and_uniqueness_of_solution}

Let $U \subset N \times \mathbb{R}^m$ be a fixed neighbourhood of $(q_0, p_0, \lambda_0) = (q_0(h), p_0(h), \lambda_0(h))$, a set of $h$-dependent starting values, and assume:
\begin{align*}
&\phi(q_0, p_0) = 0\\
&(D_1 \phi \cdot f)(q_0,p_0) + (D_2 \phi \cdot g)(q_0,p_0,\lambda_0) = \mathcal{O}(h)\\
&(D_2 \phi \cdot D_3 g)(q,p,\lambda) \text{ invertible in }U.
\end{align*}
Assume also that the Runge-Kutta coefficients $A$ verify the hypotheses \ref{itm:H1} and \ref{itm:H2}, and that $\hat{A}$ is compatible with the first and satisfies \ref{itm:H1'}. Then for $h \leq h_0$ there exists a locally unique solution to:
\begin{subequations}
	\label{eq:thm_existence_RK_subsystem}
	\begin{alignat}{1}
		Q_i &= q_0 + h \dsum_{j = 1}^s a_{i j} f(Q_j,P_j),\\
		p_i &= p_0 + h \dsum_{j = 1}^s a_{i j} g(Q_j,P_j,\Lambda_j),\\
		P_i &= p_0 + h \dsum_{j = 1}^s \hat{a}_{i j} g(Q_j,P_j,\Lambda_j),\\
		0 &= \phi(Q_i,p_i) \vphantom{\dsum_{j = 1}^s},
	\end{alignat}
\end{subequations}
with $\Lambda_1 = \lambda_0$, satisfying:
\begin{equation*}
\begin{array}{rl}
Q_i - q_0 &= \mathcal{O}(h)\\
p_i - p_0 &= \mathcal{O}(h)\\
P_i - p_0 &= \mathcal{O}(h)\\
\Lambda_i - \lambda_0 &= \mathcal{O}(h)
\end{array}
\end{equation*}
\end{theorem}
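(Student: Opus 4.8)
The plan is to recast the system \eqref{eq:thm_existence_RK_subsystem} as a single nonlinear equation $F(\mathbf{u};h)=0$ and apply the implicit function theorem at $h=0$. First I would pass to increment (scaled) variables by writing $Q_i = q_0 + h X_i$, $p_i = p_0 + h y_i$, $P_i = p_0 + h Y_i$ and $\Lambda_i = \lambda_0 + \Theta_i$ (with $\Theta_1 = 0$), substituting $V_i = f$, $W_i = g$ directly. Dividing the first three relations by $h$ turns them into $X_i = \sum_j a_{ij} f$, $y_i = \sum_j a_{ij} g$ and $Y_i = \sum_j \hat a_{ij} g$, which remain regular as $h\to0$. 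The constraint must be treated differently: since $\phi(Q_1,p_1)=\phi(q_0,p_0)=0$ by \ref{itm:H1} and the first hypothesis, the $i=1$ constraint holds automatically and can be dropped, while for $i\ge2$ the constraint is degenerate at $h=0$ and must be divided by $h$, giving $\Phi_i := h^{-1}\phi(q_0+hX_i,p_0+hy_i) = D_1\phi\,X_i + D_2\phi\,y_i + \mathcal O(h)$. This rescaling is the crucial step that exposes the hidden index-2 equation for the multiplier.

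Next I would check the hypotheses of the implicit function theorem. At $h=0$ the defining equations decouple: $X_i = c_i f(q_0,p_0)$, while $y_i,Y_i$ become explicit functions of $\Theta$, and the constraint reduces to $\sum_j a_{ij}\bigl[(D_1\phi\,f)(q_0,p_0) + (D_2\phi\,g)(q_0,p_0,\lambda_0+\Theta_j)\bigr] = 0$ for $i\ge2$. Computing the Jacobian $D_{\mathbf u}F$ at $h=0$, the blocks for $X$, $Y$ and $y$ reduce to the identity, and the only nontrivial coupling is the $\Theta$-block of the constraint rows, namely $\bigl(a_{ij}\,(D_2\phi\,D_3 g)\bigr)_{i,j\ge2}$. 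Here both structural assumptions enter: invertibility of $\tilde A = (a_{ij})_{i,j\ge2}$ (hypothesis \ref{itm:H2}) together with invertibility of $D_2\phi\,D_3 g$ (the standing assumption) makes this block, hence the full Jacobian, invertible. I would confirm this by solving the homogeneous system $D_{\mathbf u}F\,\mathbf v=0$ by back-substitution, which collapses to $\sum_{j\ge2} a_{ij}(D_2\phi\,D_3 g)\,\Theta_j = 0$ and forces $\Theta=0$, and then $X=Y=y=0$. The equations coming from $\hat A$ contribute only an identity block at $h=0$, so they do not interfere with this invertibility.

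With the Jacobian invertible, the implicit function theorem furnishes, for $h\le h_0$, a locally unique solution $\mathbf u(h)$ depending smoothly on $h$ and on the starting values. The bounds $Q_i-q_0 = \mathcal O(h)$, $p_i-p_0 = \mathcal O(h)$, $P_i-p_0 = \mathcal O(h)$ then follow at once from $X_i,y_i,Y_i = \mathcal O(1)$. The estimate $\Lambda_i-\lambda_0 = \mathcal O(h)$ is the delicate point, and I expect it to be the main obstacle. The value $\Theta(0)$ is determined, through the invertibility of $\tilde A$ and of $D_2\phi\,D_3 g$, by the consistency defect $(D_1\phi\,f + D_2\phi\,g)(q_0,p_0,\lambda_0)$, which is $\mathcal O(h)$ by the second hypothesis, whence $\Theta(0) = \mathcal O(h)$; combined with the Lipschitz dependence of $\mathbf u(h)$ on $h$ this yields $\Theta_i(h) = \mathcal O(h)$. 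The recurring subtlety is that the starting values are themselves $h$-dependent and only near-consistent, so the defect must be tracked quantitatively rather than set to zero, and the division of the constraint by $h$ must be reconciled with the $\mathcal O(h)$ right-hand sides it generates.
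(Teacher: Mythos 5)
Your proposal is correct in substance but takes a genuinely different route from the paper. The paper follows Hairer--Lubich--Roche and Jay: it embeds the system in a homotopy with parameter $\tau$, differentiates with respect to $\tau$ to obtain an ODE in $\tau$ whose initial value problem is solvable, and transports the trivial solution at $\tau=0$ to the desired solution at $\tau=1$; you instead rescale to increment variables ($Q_i=q_0+hX_i$, etc.), divide the constraints for $i\geq 2$ by $h$, and apply the implicit function theorem at $h=0$. Both arguments pivot on exactly the same structural fact, which you identify correctly: because the constraint is imposed on $(Q_i,p_i)$ with $p_i$ built from $A$ rather than on $(Q_i,P_i)$ with $P_i$ built from $\hat A$, the multiplier block of the linearisation is $(\tilde A\otimes I)\,\mathrm{blockdiag}(D_2\phi\, D_3 g)$, invertible by \ref{itm:H2} and the standing assumption, whereas the naive scheme would produce $\hat{\tilde A}$, which is singular by \ref{itm:H1'}. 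Your route is arguably more transparent about where each hypothesis enters and delivers smooth dependence of the solution on $h$ directly; the homotopy route is better adapted to the actual setting of the theorem, in which the starting values are $h$-dependent and only $\mathcal{O}(h)$-consistent, since it never needs an exact solution of the limiting equations to anchor the argument. That is the one point where your sketch is incomplete rather than wrong: the textbook implicit function theorem requires a base point at which $F(\mathbf u_0;0)=0$, and with $h$-dependent starting values the defect $(D_1\phi\cdot f+D_2\phi\cdot g)$ does not vanish at any accessible base point. You flag this yourself; to close it one should either treat the starting values (equivalently the defect) as additional parameters and invoke the implicit function theorem uniformly over a compact neighbourhood of the consistent set, or replace the qualitative theorem by a quantitative Newton--Kantorovich estimate. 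With that repair, your estimate $\Theta_i=\mathcal{O}(\|d\|)+\mathcal{O}(h)=\mathcal{O}(h)$ for the multiplier increments, and hence all four stated bounds, go through.
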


\begin{proof}
The proof of existence differs little from what is already offered in \cite{HaLuRo89} (for invertible $A$ matrix) or \cite{Jay93} (for $A$ satisfying the hypotheses \ref{itm:H1} and \ref{itm:H2}). The idea is to consider a homotopic deformation of the equations which leads to a system of differential equations where the existence of a solution for the corresponding IVP implies the existence of a solution to the original system.

The proposed homotopy is:
\begin{equation*}
\begin{array}{rl}
Q_i &= q_0 + h \dsum_{j = 1}^s a_{i j} \left[ f(Q_j,P_j) + (\tau - 1) f(q_0,p_0)\right]\\
p_i &= p_0 + h \dsum_{j = 1}^s a_{i j} \left[ g(Q_j,P_j,\Lambda_j) + (\tau - 1) g(q_0,p_0,\lambda_0)\right]\\
P_i &= p_0 + h \dsum_{j = 1}^s \hat{a}_{i j} \left[ g(Q_j,P_j,\Lambda_j) + (\tau - 1) g(q_0,p_0,\lambda_0)\right]\\
0 &= \phi(Q_i,p_i) + (\tau - 1) \phi(q_0,p_0) \vphantom{\dsum_{j = 1}^s}
\end{array}
\end{equation*}

The main differences in the proof lie in the complementary relation between the equations for $p_i$ and $P_i$. One needs to consider the differential system obtained by derivation with respect to the homotopy parameter $\tau$. The resulting system takes the form:
\begin{subequations}
	\begin{align}
	\dot{Q}_i &= h \sum_{j = 1}^s a_{i j} \left[ D_1 f(Q_j,P_j) \dot{Q}_j + D_2 f(Q_j,P_j) \dot{P}_j + f(q_0,p_0)\right] \label{eq:thm_dot_Q}\\
	\dot{p}_i &= h \sum_{j = 1}^s a_{i j} \left[ D_1 g(Q_j,P_j,\Lambda_j) \dot{Q}_j + D_2 g(Q_j,P_j,\Lambda_j) \dot{P}_j \right.\nonumber\\
	&\left. + D_3 g(Q_j,P_j,\Lambda_j) \dot{\Lambda}_j + g(q_0,p_0,\lambda_0)\right]\label{eq:thm_dot_p}\\
	\dot{P}_i &= h \sum_{j = 1}^s \hat{a}_{i j} \left[ D_1 g(Q_j,P_j,\Lambda_j) \dot{Q}_j + D_2 g(Q_j,P_j,\Lambda_j) \dot{P}_j \right.\nonumber\\
	&\left. + D_3 g(Q_j,P_j,\Lambda_j) \dot{\Lambda}_j + g(q_0,p_0,\lambda_0)\right]\label{eq:thm_dot_P}\\
	0 &= D_1\phi(Q_i,p_i)\dot{Q}_i + D_2\phi(Q_i,p_i)\dot{p}_i +  \phi(q_0,p_0)\label{eq:thm_constraint}
	\end{align}
\end{subequations}

Note that $\dot{p}_i$ depends on $\dot{Q}_j, \dot{P}_j, \dot{\Lambda}_j$, but not on $\dot{p}_j$. In fact $\dot{p}_i$ only appears in the equations for $\phi$, where it prevents the entrance of $\hat{a}$ terms. Thus the differential system that must be solved can be reduced to the $\dot{P}_j$, $\forall j = 1,...,s$ and $\dot{Q}_j, \dot{\Lambda}_j$, $\forall j = 2,...,s$ variables. The rest of the proof follows closely what the other authors do.

A remark worth mentioning is that the key of the remainder of the proof is the use of the invertibility of $D_2 \phi (\tilde{A} \otimes I) D_3 g$, which is a term arising from eq. \eqref{eq:thm_constraint}. As stated in the former section, if the system were described by eq.
 \eqref{eq:RK_partitioned_system_naive} we would instead have $D_2 \phi (\hat{\tilde{A}} \otimes I) D_3 g$, which is not invertible by \ref{itm:H1'}, rendering the system unsolvable.

The proof of uniqueness remains essentially the same.
\end{proof}


\begin{theorem}
\label{thm:impact_of_perturbations}
Under the assumptions of theorem \ref{thm:existence_and_uniqueness_of_solution}, let $Q_i$, $p_i$, $P_i$, $\Lambda_i$ be the solution of the system in said theorem. Now consider the perturbed values $\hat{Q}_i$, $\hat{p}_i$, $\hat{P}_i$, $\hat{\Lambda}_i$ satisfying:
\begin{subequations}
	\label{eq:thm_perturbed_system}
	\begin{alignat}{1}
		\hat{Q}_i &= \hat{q}_0 + h \dsum_{j = 1}^s a_{i j} f(\hat{Q}_j,\hat{P}_j) + h\delta_{Q,i}\\
		\hat{p}_i &= \hat{p}_0 + h \dsum_{j = 1}^s a_{i j} g(\hat{Q}_j,\hat{P}_j,\hat{\Lambda}_j) + h\delta_{p,i}\\
		\hat{P}_i &= \hat{p}_0 + h \dsum_{j = 1}^s \hat{a}_{i j} g(\hat{Q}_j,\hat{P}_j,\hat{\Lambda}_j) + h\delta_{P,i}\\
		0 &= \phi(\hat{Q}_i,\hat{p}_i) + \theta_i \vphantom{\dsum_{j = 1}^s}
	\end{alignat}
\end{subequations}
with $\hat{\Lambda}_1 = \hat{\lambda}_0$. Additionally, assume that:
\begin{equation}
\begin{array}{rl}
\hat{q}_0 - q_0 &= \mathcal{O}(h)\\
\hat{p}_0 - p_0 &= \mathcal{O}(h)\\
\delta_i &= \mathcal{O}(h)\\
\theta_i &= \mathcal{O}(h^2)
\end{array}
\label{eq:thm_perturbed_distance_to_initial_values}
\end{equation}

Then, using the notation $\Delta X := \hat{X} - X$ and $\left\Vert X \right\Vert := \max_i \left\Vert X_i \right\Vert$, for small $h$ we have:
\begin{small}
\begin{align*}
\left\Vert \Delta Q_i \right\Vert &\leq C \left( \left\Vert \Delta q_0 \right\Vert + h \left\Vert \Delta p_0 \right\Vert + h^2 \left\Vert \Delta \lambda_0 \right\Vert + h \left\Vert \delta_Q \right\Vert + h^2 \left\Vert \delta_p \right\Vert + h^2 \left\Vert \delta_P \right\Vert + h \left\Vert \theta \right\Vert \right)\\
\left\Vert \Delta p_i \right\Vert &\leq C \left( \left\Vert \Delta q_0 \right\Vert + \left\Vert \Delta p_0 \right\Vert + h^2 \left\Vert \Delta \lambda_0 \right\Vert + h^2 \left\Vert \delta_Q \right\Vert + h \left\Vert \delta_p \right\Vert + h^2 \left\Vert \delta_P \right\Vert + \left\Vert \theta \right\Vert \right)\\
\left\Vert \Delta P_i \right\Vert &\leq C \left( \left\Vert \Delta q_0 \right\Vert + \left\Vert \Delta p_0 \right\Vert + h \left\Vert \Delta \lambda_0 \right\Vert + h^2 \left\Vert \delta_Q \right\Vert + h \left\Vert \delta_p \right\Vert + h \left\Vert \delta_P \right\Vert + \left\Vert \theta \right\Vert \right)\\
\left\Vert \Delta \Lambda_i \right\Vert &\leq \frac{C}{h} \left( h \left\Vert \Delta q_0 \right\Vert + h \left\Vert \Delta p_0 \right\Vert + h \left\Vert \Delta \lambda_0 \right\Vert + h \left\Vert \delta_Q \right\Vert + h \left\Vert \delta_p \right\Vert + \left\Vert \theta \right\Vert \right)
\end{align*}
\end{small}

\end{theorem}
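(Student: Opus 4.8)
The plan is to reduce everything to a single linearized block system for the stage differences and then invert it for small $h$, reading off the individual powers of $h$ at the end. First I would subtract the unperturbed system \eqref{eq:thm_existence_RK_subsystem} from the perturbed one \eqref{eq:thm_perturbed_system} and rewrite each difference of nonlinear terms exactly via the integral mean value theorem: $f(\hat Q_j,\hat P_j)-f(Q_j,P_j)=\bar D_1 f\,\Delta Q_j+\bar D_2 f\,\Delta P_j$, and analogously for $g$ and $\phi$, where the barred Jacobians are the averages $\int_0^1 Df(\cdots)\,dt$ along the segment joining the two arguments. By Theorem \ref{thm:existence_and_uniqueness_of_solution} applied to both solutions (the perturbed data is an $\mathcal{O}(h)$ perturbation of the unperturbed one), all stage values lie within $\mathcal{O}(h)$ of $(q_0,p_0,\lambda_0)$, so these averaged Jacobians agree with the nominal Jacobians at $(q_0,p_0,\lambda_0)$ up to $\mathcal{O}(h)$. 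This turns the difference equations into an exact linear system in $(\Delta Q,\Delta p,\Delta P,\Delta\Lambda)$ with $\mathcal{O}(1)$ coefficients and an inhomogeneity built from $\Delta q_0,\Delta p_0,\Delta\lambda_0$, the $\delta$'s and $\theta$.

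Next, exactly as in the existence proof, I would exploit that $\Delta p$ appears only in the constraint row. Substituting the expression for $\Delta p$ into $0=\bar D_1\phi\,\Delta Q+\bar D_2\phi\,\Delta p+\theta$ produces, for each stage $i$, the term $h\,\bar D_2\phi\sum_j a_{ij}\,\bar D_3 g\,\Delta\Lambda_j$. Because \ref{itm:H1} makes the first row of $A$ vanish and $\Delta\Lambda_1=\Delta\lambda_0$ is already known, the equations for $i=2,\dots,s$ form a square system for $\Delta\Lambda_2,\dots,\Delta\Lambda_s$ whose matrix is $D_2\phi(\tilde A\otimes I)D_3 g$ --- invertible by the same hypothesis used in Theorem \ref{thm:existence_and_uniqueness_of_solution}. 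Solving it expresses $h\,\Delta\Lambda$ in terms of $\Delta Q$, $\Delta P$, $\Delta p_0$, $\Delta\lambda_0$ (through the known $\Delta\Lambda_1$ coupling, weighted by $h a_{i1}$), $\delta_p$ and $\theta$; the required division by $h$ is precisely the origin of the $1/h$ prefactor in the bound for $\Delta\Lambda$.

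The decisive cancellation comes from the first stage. Since $a_{1j}=0$, one has $\Delta Q_1=\Delta q_0+h\delta_{Q,1}$ and $\Delta p_1=\Delta p_0+h\delta_{p,1}$, so the $i=1$ constraint reads $\bar D_1\phi\,\Delta q_0+\bar D_2\phi\,\Delta p_0=-\theta_1-h(\bar D_1\phi\,\delta_{Q,1}+\bar D_2\phi\,\delta_{p,1})$, i.e. the combination $D_1\phi\,\Delta q_0+D_2\phi\,\Delta p_0$ is not merely $\mathcal{O}(1)$ but $\mathcal{O}(h^2+h\lVert\delta\rVert)$. This is what prevents the $1/h$ from amplifying $\Delta q_0$ and $\Delta p_0$ in $\Delta\Lambda$, keeping their coefficients $\mathcal{O}(1)$, and --- via the constraint tying $\bar D_2\phi\,\Delta p$ to $\Delta Q$ and $\theta$ rather than to $\Delta\Lambda$ --- is also responsible for demoting the influence of $\Delta\lambda_0$ on $\Delta Q$ and $\Delta p$ from order $h$ to order $h^2$, while it survives at order $h$ in $\Delta P$ (which alone carries the $\hat A$-weighted $\Delta\Lambda$ term directly).

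Finally I would substitute the resolved $h\,\Delta\Lambda$ back into the $\Delta Q$ and $\Delta P$ equations, obtaining a system of the form $(\mathrm{Id}-hM)(\Delta Q,\Delta P)^{T}=\text{RHS}$ with $M=\mathcal{O}(1)$; for $h\le h_0$ this is invertible by a Neumann series, yielding $\mathcal{O}(1)$ bounds on $\Delta Q$ and $\Delta P$, after which $\Delta p$ and $\Delta\Lambda$ follow by back-substitution. The routine part is then collecting norms; the hard part, and the only place where genuine care is needed, is the exact bookkeeping of which power of $h$ multiplies each of $\Delta q_0,\Delta p_0,\Delta\lambda_0,\delta_Q,\delta_p,\delta_P,\theta$ in each of the four estimates. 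These powers are not uniform across the four variables, and obtaining them requires systematically combining the asymmetry between the $A$-weighted rows (for $\Delta Q,\Delta p$) and the $\hat A$-weighted row (for $\Delta P$) with the first-stage cancellation above, rather than bounding term by term --- a naive triangle-inequality estimate overshoots several of the exponents.
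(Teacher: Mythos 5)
Your overall route is the same as the paper's: subtract, linearise, exploit that $\Delta p$ enters only the constraint row, eliminate $\Delta\Lambda_2,\dots,\Delta\Lambda_s$ through the invertibility of $D_2\tilde\phi\,(\tilde A\otimes I)\,D_3\tilde g$ (whence the $1/h$ in the $\Delta\Lambda$ bound), use the first-stage relation $D_1\phi\,\Delta q_0+D_2\phi\,\Delta p_0=\mathcal{O}(h\Vert\delta\Vert+\Vert\theta_1\Vert+\Vert\Delta\eta\Vert^2)$ to keep $\Delta q_0,\Delta p_0$ from being amplified by $1/h$, and close with a Neumann-series inversion of $I-hM$. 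All of that matches the paper. But there is a genuine gap at the single decisive point, which you flag as ``bookkeeping'' without supplying the mechanism: the coefficient $h^2\Vert\Delta\lambda_0\Vert$ in the bounds for $\Delta Q_i$ and $\Delta p_i$, versus $h\Vert\Delta\lambda_0\Vert$ for $\Delta P_i$. After eliminating $\Delta\tilde\Lambda$, \emph{both} $\Delta p$ and $\Delta P$ still carry a direct stage-one term of the form $h\,a_{i1}D_3g_1\,\Delta\lambda_0$ (resp.\ with $\hat a_{i1}$), so your statement that $\Delta P$ ``alone carries the $\hat A$-weighted $\Delta\Lambda$ term directly'' does not explain the asymmetry, and a direct reading gives only $h\Vert\Delta\lambda_0\Vert$ for all three quantities. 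The first-stage constraint cancellation cannot help here either: that relation involves $\Delta q_0$ and $\Delta p_0$ but contains no $\Delta\lambda_0$, so attributing the demotion from $h$ to $h^2$ to it is a misattribution.

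The actual mechanism in the paper is a kernel property of the projector produced by the elimination step. Writing $\Pi_A=I-(\tilde A\otimes I)\tilde X_2+\cdots$ with $\tilde X_2=D_3\tilde g\,\bigl(D_2\tilde\phi\,(\tilde A\otimes I)\,D_3\tilde g\bigr)^{-1}D_2\tilde\phi$, one checks the exact identity $\tilde\Pi_{2,A}\,(\tilde A\otimes I)\,D_3\tilde g\,(\tilde A\otimes I)^{-1}=0$. Since all stage Jacobians agree with $D_3g(q_0,p_0,\lambda_0)$ to $\mathcal{O}(h)$, the stage-one block $D_3g_1$ differs from $(\tilde A\otimes I)D_3\tilde g(\tilde A\otimes I)^{-1}$ by $\mathcal{O}(h)$, so the surviving $\Delta\lambda_0$ contribution to $\Delta p$ is
\begin{equation*}
h\,\tilde\Pi_{2,A}\Bigl((\tilde A\otimes I)D_3\tilde g(\tilde A\otimes I)^{-1}-D_3g_1\Bigr)\bigl(\tilde A_1\otimes\Delta\Lambda_1\bigr)=\mathcal{O}\bigl(h^2\Vert\Delta\lambda_0\Vert\bigr),
\end{equation*}
and feeding this back into the $\Delta Q$ equation gives the same gain there. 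No analogous identity holds for the $\hat A$-weighted row (the relevant projector is still built from $A$, not $\hat A$), which is exactly why $\Delta P_i$ stays at $h\Vert\Delta\lambda_0\Vert$. Without this cancellation your argument proves the theorem only with $h\Vert\Delta\lambda_0\Vert$ in place of $h^2\Vert\Delta\lambda_0\Vert$ in the first two estimates, and that weaker version is not sufficient for the later error propagation (it is precisely the $h^{m+2}\Vert\Delta\lambda_0\Vert$ gain that Theorem \ref{thm:RK_total_error} and the global error analysis rely on).
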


\begin{proof}
To tackle this problem we first subtract eq. \eqref{eq:thm_existence_RK_subsystem} from eq. \eqref{eq:thm_perturbed_system} and linearise, obtaining:
\begin{align*}
\Delta Q_i &= \Delta q_0 + h \sum_{j = 1}^s a_{i j} \left[ D_1 f(Q_j,P_j) \Delta Q_j + D_2 f(Q_j,P_j) \Delta P_j\right] + h \delta_{Q,i}\\
&+ \mathcal{O}\left(h \Vert\Delta Q\Vert^2 + h\Vert\Delta P\Vert^2 + h \Vert\Delta Q\Vert \Vert\Delta P\Vert\right)\\
\Delta p_i &= \Delta p_0 + h \sum_{j = 1}^s a_{i j} \left[ D_1 g(Q_j,P_j,\Lambda_j) \Delta Q_j + D_2 g(Q_j,P_j,\Lambda_j) \Delta P_j \right.\\
&\left. + D_3 g(Q_j,P_j,\Lambda_j) \Delta \Lambda_j\right] + h\delta_{p,i} + \mathcal{O}\left(h \Vert\Delta Q\Vert^2 + h\Vert\Delta P\Vert^2\right.\\
&\left.+ h \Vert\Delta Q\Vert \Vert\Delta P\Vert + h \Vert\Delta Q\Vert \Vert\Delta \Lambda\Vert + h \Vert\Delta P\Vert \Vert\Delta \Lambda\Vert\right)\\
\Delta P_i &= \Delta p_0 + h \sum_{j = 1}^s \hat{a}_{i j} \left[ D_1 g(Q_j,P_j,\Lambda_j) \Delta Q_j + D_2 g(Q_j,P_j,\Lambda_j) \Delta P_j \right.\\
&\left. + D_3 g(Q_j,P_j,\Lambda_j) \Delta \Lambda_j\right] + h\delta_{P,i} + \mathcal{O}\left(h \Vert\Delta Q\Vert^2 + h\Vert\Delta P\Vert^2\right.\\
&\left.+ h \Vert\Delta Q\Vert \Vert\Delta P\Vert + h \Vert\Delta Q\Vert \Vert\Delta \Lambda\Vert + h \Vert\Delta P\Vert \Vert\Delta \Lambda\Vert\right)\\
0 &= D_1\phi(Q_i,p_i) \Delta Q_i + D_2\phi(Q_i,p_i) \Delta p_i + \theta_i\\
&+ \mathcal{O}\left(\Vert\Delta Q\Vert^2 + \Vert\Delta p\Vert^2 + \Vert\Delta Q\Vert \Vert\Delta p\Vert\right)
\end{align*}

We will write this system of equations as separate matrix subsystems:
\begin{small}
\begin{align*}
&\left[\begin{smallmatrix}
      \Delta Q_1\\
\Delta \tilde{Q}\\
      \Delta P_1\\
\Delta \tilde{P}\\
\end{smallmatrix}
\right] = \left[\begin{smallmatrix}
                \Delta q_0\\
I_{s-1} \otimes \Delta q_0\\
                \Delta p_0\\
I_{s-1} \otimes \Delta p_0
\end{smallmatrix}
\right] + h\left[\begin{smallmatrix}
      \delta_{Q,1}\\
\tilde{\delta}_{Q}\\
      \delta_{P,1}\\
\tilde{\delta}_{P}\\
\end{smallmatrix}
\right] + h\left[\begin{smallmatrix}
      A_1^1 \otimes I_{n} &       A^1 \otimes I_{n} &                               0 &                             0\\
\tilde{A}_1 \otimes I_{n} & \tilde{A} \otimes I_{n} &                               0 &                             0\\
                        0 &                       0 &       \hat{A}_1^1 \otimes I_{n} &       \hat{A}^1 \otimes I_{n}\\
                        0 &                       0 & \hat{\tilde{A}}_1 \otimes I_{n} & \hat{\tilde{A}} \otimes I_{n}
\end{smallmatrix}
\right]\nonumber\\
& \times \left(\left[\begin{smallmatrix}
D_1 f_1 &             0 & D_2 f_1 &             0\\
      0 & D_1 \tilde{f} &       0 & D_2 \tilde{f}\\
D_1 g_1 &             0 & D_2 g_1 &             0\\
      0 & D_1 \tilde{g} &       0 & D_2 \tilde{g}
\end{smallmatrix}
\right] \left[\begin{smallmatrix}
      \Delta Q_1\\
\Delta \tilde{Q}\\
      \Delta P_1\\
\Delta \tilde{P}
\end{smallmatrix}
\right] + \left[\begin{smallmatrix}
      0 &             0\\
      0 &             0\\
D_3 g_1 &             0\\
      0 & D_3 \tilde{g}
\end{smallmatrix}
\right] \left[\begin{smallmatrix}
      \Delta \Lambda_1\\
\Delta \tilde{\Lambda}\\
\end{smallmatrix}
\right] \right)
\end{align*}
\begin{align*}
&\left[\begin{smallmatrix}
      \Delta Q_1\\
\Delta \tilde{Q}\\
      \Delta p_1\\
\Delta \tilde{p}\\
\end{smallmatrix}
\right] = \left[\begin{smallmatrix}
                \Delta q_0\\
I_{s-1} \otimes \Delta q_0\\
                \Delta p_0\\
I_{s-1} \otimes \Delta p_0
\end{smallmatrix}
\right] + h\left[\begin{smallmatrix}
      \delta_{Q,1}\\
\tilde{\delta}_{Q}\\
      \delta_{p,1}\\
\tilde{\delta}_{p}\\
\end{smallmatrix}
\right] + h \left[\begin{smallmatrix}
      A_1^1 \otimes I_{n} &       A^1 \otimes I_{n} &                         0 &                       0\\
\tilde{A}_1 \otimes I_{n} & \tilde{A} \otimes I_{n} &                         0 &                       0\\
                        0 &                       0 &       A_1^1 \otimes I_{n} &       A^1 \otimes I_{n}\\
                        0 &                       0 & \tilde{A}_1 \otimes I_{n} & \tilde{A} \otimes I_{n}
\end{smallmatrix}
\right]\nonumber\\
& \times \left(\left[\begin{smallmatrix}
D_1 f_1 &             0 & D_2 f_1 &             0\\
      0 & D_1 \tilde{f} &       0 & D_2 \tilde{f}\\
D_1 g_1 &             0 & D_2 g_1 &             0\\
      0 & D_1 \tilde{g} &       0 & D_2 \tilde{g}
\end{smallmatrix}
\right] \left[\begin{smallmatrix}
      \Delta Q_1\\
\Delta \tilde{Q}\\
      \Delta P_1\\
\Delta \tilde{P}
\end{smallmatrix}
\right] + \left[\begin{smallmatrix}
      0 &             0\\
      0 &             0\\
D_3 g_1 &             0\\
      0 & D_3 \tilde{g}
\end{smallmatrix}
\right] \left[\begin{smallmatrix}
      \Delta \Lambda_1\\
\Delta \tilde{\Lambda}\\
\end{smallmatrix}
\right] \right)
\end{align*}
\begin{equation*}
\left[\begin{smallmatrix}
D_1 \phi_1 &                0 & D_2 \phi_1 &                0\\
         0 & D_1 \tilde{\phi} &          0 & D_2 \tilde{\phi}\\
\end{smallmatrix}
\right] \left[\begin{smallmatrix}
      \Delta Q_1\\
\Delta \tilde{Q}\\
      \Delta p_1\\
\Delta \tilde{p}\\
\end{smallmatrix}
\right] + \left[\begin{smallmatrix}
     \theta_1\\
\tilde{\theta}
\end{smallmatrix}
\right] = 0
\end{equation*}
\end{small}

Let us rewrite this in shorthand notation as:
\begin{subequations}
	\label{eq:proof_perturbation_shorthand}
	\begin{alignat}{1}
		\Delta Y &= \Delta \eta + h \delta_Y + h \left(\leftidx{^A}{\phantom{.}}{_{\hat{A}}}\right) \left( D_y F \Delta Y + D_z F \Delta \Lambda \right)\label{eq:proof_perturbation_shorthand_Y}\\
		\Delta y &= \Delta \eta + h \delta_y + h \left(\leftidx{^A}{\phantom{.}}{_{A}}\right) \left( D_y F \Delta Y + D_z F \Delta \Lambda \right)\label{eq:proof_perturbation_shorthand_y}\\
		0 &= D_y \phi \Delta y + \theta \vphantom{\left(\leftidx{^A}{\phantom{.}}{_{A}}\right)}\label{eq:proof_perturbation_shorthand_constraint}
	\end{alignat}
\end{subequations}

Using hypothesis \ref{itm:H1} we find that:
\begin{align*}
\Delta Q_1 &= \Delta q_0 + h \delta_{Q,1}\\
\Delta p_1 &= \Delta p_0 + h \delta_{p,1}\\
D_1\phi(q_0,p_0) \Delta q_0 + D_2\phi(q_0,p_0) \Delta p_0 &= \mathcal{O}\left(h \Vert\delta_{Q,1}\Vert + h \Vert\delta_{p,1}\Vert + \Vert\theta_1\Vert\right.\\
&\left.+ \Vert\Delta q_0\Vert^2 + \Vert\Delta p_0\Vert^2 + \Vert\Delta q_0\Vert\Vert\Delta p_0\Vert \right)
\end{align*}

The equivalent version of the last line in \cite{Jay93} (formula 4.8) contains an erratum. It should read:
\begin{equation*}
g_y(\eta) \Delta\eta = \mathcal{O}(\Vert\Delta\eta\Vert^2 + h\Vert\delta_1\Vert + \Vert\theta_1\Vert)
\end{equation*}

Most of the proof will follow the lines of the one of \cite{Jay93}. We will first insert eq. \eqref{eq:proof_perturbation_shorthand_y} in the constraint eq. \eqref{eq:proof_perturbation_shorthand_constraint}
\begin{equation*}
D_y \phi \left[\Delta \eta + h \delta_y + h \left(\leftidx{^A}{\phantom{.}}{_{A}}\right) \left( D_y F \Delta Y + D_z F \Delta \Lambda \right)\right] + \theta = 0
\end{equation*}

Our mission will be to solve for $\Delta \Lambda$, but due to the singularity of $A$ it will not be possible to solve for the entire vector. Instead, abusing our notation a bit, we will separate the term as $D_z F \Delta \Lambda = D_z F_1 \Delta \Lambda_1 + D_z \tilde{F} \Delta \tilde{\Lambda}$, which leads to:
\begin{align*}
- h D_y \phi \left(\leftidx{^A}{\phantom{.}}{_{A}}\right) D_z \tilde{F} \Delta \tilde{\Lambda} = &D_y \phi \left[\Delta \eta + h \delta_y + h \left(\leftidx{^A}{\phantom{.}}{_{A}}\right) \left( D_y F \Delta Y + D_z F_1 \Delta \Lambda_1 \right)\right]\nonumber\\
&+ \theta
\end{align*}

Using \ref{itm:H1} and taking into account all the zeros that appear in the rest of the elements, the left-hand side can be reduced to $- h D_2 \tilde{\phi} \left(\tilde{A} \otimes I_{n}\right) D_3 \tilde{g} \Delta \tilde{\Lambda}$. Solving for $h \Delta \tilde{\Lambda}$ we get in matrix notation:
\begin{small}
\begin{align*}
&h \Delta \tilde{\Lambda}\\
=& - \left[\begin{smallmatrix}
         \tilde{0}_1 & \left(D_2 \tilde{\phi} \left(\tilde{A} \otimes I_{n}\right) D_3 \tilde{g}\right)^{-1}
\end{smallmatrix}
\right] \left\lbrace\left[\begin{smallmatrix}
     \theta_1\\
\tilde{\theta}
\end{smallmatrix}
\right] + \left[\begin{smallmatrix}
D_1 \phi_1 &                0 & D_2 \phi_1 &                0\\
         0 & D_1 \tilde{\phi} &          0 & D_2 \tilde{\phi}\\
\end{smallmatrix}
\right]\left[\left[\begin{smallmatrix}
                \Delta q_0\\
I_{s-1} \otimes \Delta q_0\\
                \Delta p_0\\
I_{s-1} \otimes \Delta p_0
\end{smallmatrix}
\right] + h\left[\begin{smallmatrix}
      \delta_{Q,1}\\
\tilde{\delta}_{Q}\\
      \delta_{p,1}\\
\tilde{\delta}_{p}\\
\end{smallmatrix}
\right]\right.\right.\nonumber\\
&+\left.\left. h \left[\begin{smallmatrix}
A_1^1 \otimes I_{n} &       A^1 \otimes I_{n} &                   0 &                       0\\
  A_1 \otimes I_{n} & \tilde{A} \otimes I_{n} &                   0 &                       0\\
                  0 &                       0 & A_1^1 \otimes I_{n} &       A^1 \otimes I_{n}\\
                  0 &                       0 &   A_1 \otimes I_{n} & \tilde{A} \otimes I_{n}
\end{smallmatrix}
\right] \left(\left[\begin{smallmatrix}
D_1 f_1 &             0 & D_2 f_1 &             0\\
      0 & D_1 \tilde{f} &       0 & D_2 \tilde{f}\\
D_1 g_1 &             0 & D_2 g_1 &             0\\
      0 & D_1 \tilde{g} &       0 & D_2 \tilde{g}
\end{smallmatrix}
\right] \left[\begin{smallmatrix}
      \Delta Q_1\\
\Delta \tilde{Q}\\
      \Delta P_1\\
\Delta \tilde{P}
\end{smallmatrix}
\right] + \left[\begin{smallmatrix}
      0\\
      0\\
D_3 g_1\\
      0
\end{smallmatrix}
\right] \Delta \Lambda_1 \right)\right]\right\rbrace\nonumber
\end{align*}
\end{small}

Let us introduce the notation:
\begin{align*}
\left(D_2 \phi A D_3 g\right)^{-} =& \left[\begin{smallmatrix}
     0_1^1 &                                                                                   0^1\\
\tilde{0}_1& \left(D_2 \tilde{\phi} \left(\tilde{A} \otimes I_{n}\right) D_3 \tilde{g}\right)^{-1}
\end{smallmatrix}
\right]
\end{align*}

We can now insert this back into $\Delta Y$ and obtain:
\begin{align}
\Delta Y =& \Delta \eta + h \delta_Y + h \left(\leftidx{^A}{\phantom{.}}{_{\hat{A}}}\right) \left( D_y F \Delta Y + D_z F_1 \Delta \Lambda_1 \right)\label{eq:proof_perturbation_Y_extended}\\
&- \left(\leftidx{^A}{\phantom{.}}{_{\hat{A}}}\right) D_z \tilde{F} \left(\leftidx{^0}{\phantom{.}}{_{\left(D_2 \phi A D_3 g\right)^{-}}}\right)\nonumber\\
&\times \left\lbrace D_y \phi  \left[ \Delta \eta + h \delta_y + h \left(\leftidx{^A}{\phantom{.}}{_{A}}\right) \left( D_y F \Delta Y + D_z F_1 \Delta \Lambda_1 \right)\right] + \theta\right\rbrace\nonumber
\end{align}

Introducing the projectors:
\begin{align*}
\Pi_{\hat{A}}^{A} 
:=& I - \left(\leftidx{^A}{\phantom{.}}{_{\hat{A}}}\right) D_z \tilde{F} \left(\leftidx{^0}{\phantom{.}}{_{\left(D_2 \phi A D_3 g\right)^{-}}}\right) D_y \phi\\
P_{A} &:= I - D_z \tilde{F} \left(\leftidx{^0}{\phantom{.}}{_{\left(D_2 \phi A D_3 g\right)^{-}}}\right) D_y \phi \left(\leftidx{^A}{\phantom{.}}{_{A}}\right)
\end{align*}
this expression can be further simplified as:
\begin{align}
\Delta Y =& \Pi_{\hat{A}}^{A} \left( \Delta \eta + h \delta_y \right) + h \left(\leftidx{^A}{\phantom{.}}{_{\hat{A}}}\right) P_{A} \left( D_y F \Delta Y + D_z F_1 \Delta \Lambda_1 \right)\label{eq:proof_perturbation_Y}\\
&- \left(\leftidx{^A}{\phantom{.}}{_{\hat{A}}}\right) D_z \tilde{F} \left(\leftidx{^0}{\phantom{.}}{_{\left(D_2 \phi A D_3 g\right)^{-}}}\right)\theta + h \left(\delta_Y - \delta_y\right)\nonumber
\end{align}

As for $\Delta y$, we have:
\begin{align}
\Delta y = &\Delta \eta + h \delta_y + h \left(\leftidx{^A}{\phantom{.}}{_{A}}\right) \left( D_y F \Delta Y + D_z F_1 \Delta \Lambda_1 \right)\label{eq:proof_perturbation_y_extended}\\
&- \left(\leftidx{^A}{\phantom{.}}{_{A}}\right) D_z \tilde{F} \left(\leftidx{^0}{\phantom{.}}{_{\left(D_2 \phi A D_3 g\right)^{-}}}\right)\nonumber\\
&\times \left\lbrace D_y \phi  \left[ \Delta \eta + h \delta_y + h \left(\leftidx{^A}{\phantom{.}}{_{A}}\right) \left( D_y F \Delta Y + D_z F_1 \Delta \Lambda_1 \right)\right] + \theta\right\rbrace\nonumber
\end{align}
which, using $\Pi_A := \Pi_{A}^{A}$, can be simplified as:
\begin{align}
\Delta y = & \Pi_A \left[\Delta \eta + h \delta_y + h \left(\leftidx{^A}{\phantom{.}}{_{A}}\right) \left( D_y F \Delta Y + D_z F_1 \Delta \Lambda_1 \right)\right]\label{eq:proof_perturbation_y}\\
&- \left(\leftidx{^A}{\phantom{.}}{_{A}}\right) D_z \tilde{F} \left(\leftidx{^0}{\phantom{.}}{_{\left(D_2 \phi A D_3 g\right)^{-}}}\right)\theta.\nonumber
\end{align}
From eqs.\eqref{eq:proof_perturbation_Y} and \eqref{eq:proof_perturbation_y} we can derive the result of the theorem almost directly. The trickiest term, $h^2 \left\Vert \Delta \lambda_0 \right\Vert$ in $\left\Vert \Delta p_i \right\Vert$, is the one already derived by Jay in \cite{Jay93}. Reading off the terms directly seems to point towards $h \left\Vert \Delta \lambda_0 \right\Vert$, but this estimation can be refined as follows. Realise that:
\begin{align}
\Pi_{A} =& \left[\begin{smallmatrix}
1 \otimes I_{n} &                                                  0 &               0 &                     0\\
              0 &                              I_{s-1} \otimes I_{n} &               0 &                     0\\
              0 &                                                  0 & 1 \otimes I_{n} &                     0\\
              0 & - \left(\tilde{A} \otimes I_{n}\right) \tilde{X}_1 &               0 & I_{s-1} \otimes I_{n} - \left(\tilde{A} \otimes I_{n}\right)\tilde{X}_2
\end{smallmatrix}
\right]\label{eq:proof_RK_total_PiA}\\
=& \left[\begin{smallmatrix}
1 \otimes I_{n} &                     0 &               0 &                     0\\
              0 & I_{s-1} \otimes I_{n} &               0 &                     0\\
              0 &                     0 & 1 \otimes I_{n} &                     0\\
              0 &     \tilde{\Pi}_{1,A} &               0 &     \tilde{\Pi}_{2,A}
\end{smallmatrix}
\right]\nonumber
\end{align}
with:
\begin{align}
\tilde{X}_i :=& D_3 \tilde{g} \left( D_2 \tilde{\phi} \left(\tilde{A} \otimes I_{n}\right) D_3 \tilde{g} \right)^{-1} D_i \tilde{\phi}\label{eq:def_X_i}\\
= & D_3 \tilde{g} \left(\tilde{A} \otimes I_{m}\right)^{-1} \left( D_2 \tilde{\phi} \left(\tilde{A} \otimes I_{n}\right) D_3 \tilde{g} \left(\tilde{A} \otimes I_{m}\right)^{-1}\right)^{-1} D_i \tilde{\phi}\nonumber
\end{align}
where in the second line we have inserted the identity matrix as $I_s = \tilde{A}^{-1} \tilde{A}$.

One can easily check that $\tilde{\Pi}_{2,A} \left(\tilde{A} \otimes I_{n}\right) D_3 \tilde{g} \left(\tilde{A} \otimes I_{m}\right)^{-1} = 0$. Now, the non-zero components of $h \Pi_A \left(\leftidx{^A}{\phantom{.}}{_{A}}\right) D_z F_1 \Delta \Lambda_1$ are $h \tilde{\Pi}_{2,A} D_3 g_1 \left(\tilde{A}_1 \otimes \Delta \Lambda_1\right)$, thus we can finally write:
\begin{align}
&h \tilde{\Pi}_{2,A} \left( \left(\tilde{A} \otimes I_{n}\right) D_3 \tilde{g} \left(\tilde{A} \otimes I_{n}\right)^{-1} - D_3 g_1\right) \left(\tilde{A}_1 \otimes \Delta \Lambda_1\right)\label{eq:proof_perturbation_Lambda_error}\\
&\quad\quad = h \tilde{\Pi}_{2,A} \mathcal{O}(h) \left(\tilde{A}_1 \otimes \Delta \Lambda_1\right)\nonumber\\
&\quad\quad = \mathcal{O}\left(h^2 \left\Vert\Delta \lambda_0\right\Vert\right)\nonumber
\end{align}
This cannot be done for $\left\Vert \Delta P_i \right\Vert$, which makes it $\mathcal{O}\left(h \left\Vert\Delta \lambda_0\right\Vert\right)$. Inserting this back into either $\Delta Y$ or $\Delta y$ confirms that $\left\Vert \Delta Q_i \right\Vert$ is $\mathcal{O}\left(h^2 \left\Vert\Delta \lambda_0\right\Vert\right)$.
\end{proof}

\begin{lemma}
\label{lem:RK_delta_error}
In addition to the hypotheses of theorem \ref{thm:existence_and_uniqueness_of_solution}, suppose that $C(q)$, $\hat{C}(\hat{q})$ and $C\hat{C}(Q)$ and that $(D_1 \phi \cdot f)(q_0,p_0) + (D_2 \phi \cdot g)(q_0,p_0,\lambda_0) = \mathcal{O}(h^\kappa)$, with $\kappa \geq 1$. Then the solution of eq. \eqref{eq:thm_existence_RK_subsystem}, $Q_i$, $p_i$, $P_i$ and $\Lambda_i$ satisfies:
\begin{equation*}
\begin{array}{rl}
Q_i &= q_0 + \dsum_{j = 1}^{\lambda} \dfrac{c_i^j h^j}{j!} DQ_{(j)}(q_0,p_0) + \mathcal{O}(h^{\lambda + 1})\\
p_i &= p_0 + \dsum_{j = 1}^{\lambda} \dfrac{c_i^j h^j}{j!} DP_{(j)}(q_0,p_0,\lambda_0) + \mathcal{O}(h^{\lambda + 1})\\
P_i &= p_0 + \dsum_{j = 1}^{\mu} \dfrac{c_i^j h^j}{j!} DP_{(j)}(q_0,p_0,\lambda_0) + \mathcal{O}(h^{\mu + 1})\\
\Lambda &= \lambda_0(q_0,p_0) + \dsum_{j = 1}^{\nu} \dfrac{c_i^j h^j}{j!} D\Lambda_{(j)}(q_0,p_0,\lambda_0) + \mathcal{O}(h^{\nu + 1})
\end{array}
\end{equation*}
where $\lambda_0(q_0,p_0)$ is implicitly defined by the condition $(D_1 \phi \cdot f)(q_0,p_0) + (D_2 \phi \cdot g)(q_0,p_0,\lambda_0(q_0,p_0)) = 0$, $\lambda = \min(\kappa + 1, q, \max(\hat{q} + 1, Q + 1))$, $\mu = \min(\kappa, \hat{q})$, $\nu = \min(\kappa - 1, q - 1)$, and $DQ_{(i)}$, $DP_{(i)}$ and $D\Lambda_{(i)}$ are functions composed by the derivatives of $f$, $g$ and $\phi$ evaluated at $(q_0, p_0, \lambda_0(q_0,p_0))$.
\end{lemma}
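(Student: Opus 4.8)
The plan is to compare the genuine stages solving \eqref{eq:thm_existence_RK_subsystem} with an explicit candidate assembled from the exact solution of the underlying index-2 DAE, reading off the coefficients by order matching and controlling the remainders through the perturbation estimates of Theorem \ref{thm:impact_of_perturbations}. First I would fix the consistent initial data $(q_0,p_0,\lambda_0(q_0,p_0))$, noting that $\lambda_0(q_0,p_0)$ exists by the invertibility of $D_2\phi\, D_3 g$ and that the hypothesis $(D_1\phi\, f)(q_0,p_0)+(D_2\phi\, g)(q_0,p_0,\lambda_0)=\mathcal{O}(h^{\kappa})$ translates into $\lambda_0-\lambda_0(q_0,p_0)=\mathcal{O}(h^{\kappa})$. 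The DAE with these consistent data has a unique smooth solution $t\mapsto(q(t),p(t),\lambda(t))$; differentiating $\dot q=f$, $\dot p=g$ and the once-differentiated (hidden) constraint $D_1\phi\, f+D_2\phi\, g=0$ repeatedly at $t=0$ and solving for the $\lambda$-derivatives via invertibility of $D_2\phi\, D_3 g$ produces exactly the coefficient functions $DQ_{(j)},DP_{(j)},D\Lambda_{(j)}$ of the statement. I would stress that $(q(t),p(t))$ stays on the constraint manifold, so that $\phi(q(t),p(t))\equiv 0$.

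Next I would introduce the truncated Taylor polynomials $\bar Q_i,\bar p_i,\bar P_i,\bar\Lambda_i$ equal to the right-hand sides of the asserted expansions, i.e. the partial sums at the shifted argument $c_i h$ of $q,p,p,\lambda$ up to orders $\lambda,\lambda,\mu,\nu$; since $c_1=0$ this forces $\bar\Lambda_1=\lambda_0(q_0,p_0)$. Inserting these polynomials into \eqref{eq:thm_existence_RK_subsystem} defines residuals $h\delta_{Q,i},h\delta_{p,i},h\delta_{P,i}$ and $\theta_i$. Because $q(c_i h)=q_0+\int_0^{c_i h} f\,ds$, $p(c_i h)=p_0+\int_0^{c_i h} g\,ds$ and $\phi\equiv 0$ hold for the exact flow, every residual is the superposition of just two effects: the error of the stage quadratures on polynomial integrands, and the truncation of the inserted polynomials. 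The quadrature errors in the $Q_i$ and $p_i$ relations are suppressed up to order $q$ by $C(q)$, those in the $P_i$ relation up to order $\hat q$ by $\hat C(\hat q)$; the key point is that the $P$-dependence entering $f$ and $g$ in the $A$-weighted relations, which naively would only be accurate to $\hat q+1$, can be pushed down to order $Q+1$ by the composite condition $C\hat C(Q)$, and this is precisely the origin of the factor $\max(\hat q+1,Q+1)$ in $\lambda$. Carrying out this bookkeeping gives $\delta_Q,\delta_p=\mathcal{O}(h^{\lambda})$, $\delta_P=\mathcal{O}(h^{\mu})$ and, since the truncated $\bar Q_i,\bar p_i$ match the constraint to order $\lambda$, $\theta_i=\mathcal{O}(h^{\lambda+1})$.

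I would treat the algebraic variable separately, since $\Lambda_i$ has no explicit relation of its own and is fixed only implicitly by $0=\phi(Q_i,p_i)$. Expanding this constraint with the already-determined expansions of $Q_i$ and $p_i$ and collecting powers of $c_i h$, the vanishing of the order-$k$ coefficient is a relation of the form $D_1\phi\, DQ_{(k)}+D_2\phi\, DP_{(k)}+\dots=0$ in which the highest $\Lambda$-derivative that occurs is $D\Lambda_{(k-1)}$, again solvable by invertibility of $D_2\phi\, D_3 g$. This one-step shift is what turns the order $\min(\kappa,q)$ available for $Q_i,p_i$ into $\nu=\min(\kappa-1,q-1)$ for $\Lambda_i$, and it is mirrored by the factor $1/h$ in the $\Lambda$-estimate of Theorem \ref{thm:impact_of_perturbations}. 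With the defect orders above, together with the perturbed data $\Delta q_0=\Delta p_0=0$ and $\Delta\lambda_0=\mathcal{O}(h^{\kappa})$ arising from $\bar\Lambda_1$, Theorem \ref{thm:impact_of_perturbations} bounds $\hat X-X$ for each family and reproduces the four claimed remainders $\mathcal{O}(h^{\lambda+1}),\mathcal{O}(h^{\lambda+1}),\mathcal{O}(h^{\mu+1}),\mathcal{O}(h^{\nu+1})$, which, via $Q_i=\bar Q_i-\Delta Q_i$ and the analogous identities, yield the expansions.

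The hard part, and what distinguishes this from the non-partitioned argument of \cite{Jay93}, is the simultaneous bookkeeping of the two coefficient families $A$ and $\hat A$ inside one and the same induction: the coupling makes $\Lambda_i$ reappear (with a prefactor $h$) inside the $g$-terms of all of $Q_i,p_i,P_i$, so the four expansions cannot be decoupled and must be advanced together. In particular I expect the delicate step to be verifying that the mixed $P$-contribution in the $Q_i$ and $p_i$ relations is genuinely of order $\max(\hat q+1,Q+1)$ rather than merely $\hat q+1$, which forces the use of $C\hat C(Q)$ and not only of $C(q)$ and $\hat C(\hat q)$, and to check that the order inequalities needed to apply Theorem \ref{thm:impact_of_perturbations} — notably $\lambda\ge\min(\kappa,q)$, so that the $\theta/h$ term does not degrade the $\Lambda$-estimate — are consistent with the stated values of $\lambda,\mu,\nu$.
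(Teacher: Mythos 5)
Your proposal is correct and follows essentially the same route as the paper: compare the Runge--Kutta stages against exact-solution data with consistent $\lambda_0(q_0,p_0)$ (so $\Delta\lambda_0=\mathcal{O}(h^\kappa)$), compute the defects $\delta_Q,\delta_p,\delta_P$ by Taylor expansion using $C(q)$ and $\hat C(\hat q)$, exploit the fact that $\delta_P$ enters the $Q$- and $p$-equations multiplied by $A$ so that $C\hat C(Q)$ upgrades its contribution, and conclude via Theorem \ref{thm:impact_of_perturbations}. The only cosmetic difference is that you insert truncated Taylor polynomials (making $\theta_i=\mathcal{O}(h^{\lambda+1})$ and requiring the consistency check you flag), whereas the paper inserts the exact DAE solution at the nodes, for which the constraint residual vanishes identically.
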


\begin{proof}
Following \cite{Jay93}, Lemma 4.3, we can use the implicit function theorem to obtain $\lambda_0(q_0,p_0) - \lambda_0 = \mathcal{O}(h^\kappa)$. Assume $(q(t), p(t), \lambda(t))$ is the exact solution of eq. \eqref{eq:partitioned_system} with $q(t_0) = q_0$, $p(t_0) = p_0$ and $\lambda(t_0) = \lambda_0$, and let $Q_i = q(t_0 + c_i h)$, $p_i = P_i = p(t_0 + c_i h)$ and $\Lambda_i = \lambda(t_0 + c_i h)$ in the result of theorem \ref{eq:thm_perturbed_distance_to_initial_values}. Finally set $\hat{Q}_i$, $\hat{p}_i$, $\hat{P}_i$ and $\hat{\Lambda}_i$ be the solution of eq. \eqref{eq:thm_perturbed_system} with $\hat{q}_0 = q_0$, $\hat{p}_0 = p_0$, $\hat{\lambda}_0 = \lambda_0(q_0,p_0)$ and $\theta = 0$. As we satisfy the conditions of theorem \ref{thm:impact_of_perturbations} we are left with:
\begin{align*}
\left\Vert \Delta Q_i \right\Vert &\leq C \left( h^{\kappa + 2} + h \left\Vert \delta_Q \right\Vert + h^2 \left\Vert \delta_p \right\Vert + h^2 \left\Vert \delta_P \right\Vert\right)\\
\left\Vert \Delta p_i \right\Vert &\leq C \left( h^{\kappa + 2} + h^2 \left\Vert \delta_Q \right\Vert + h \left\Vert \delta_p \right\Vert + h^2 \left\Vert \delta_P \right\Vert \right)\\
\left\Vert \Delta P_i \right\Vert &\leq C \left( h^{\kappa + 1} + h^2 \left\Vert \delta_Q \right\Vert + h \left\Vert \delta_p \right\Vert + h \left\Vert \delta_P \right\Vert\right)\\
\left\Vert \Delta \Lambda_i \right\Vert &\leq C \left( h^{\kappa} + \left\Vert \delta_Q \right\Vert + \left\Vert \delta_p \right\Vert\right)
\end{align*}
where we have made use of the fact that $\left\Vert \Delta \lambda_0 \right\Vert = \mathcal{O}(h^\kappa)$. What remains is to compute $\delta_Q$, $\delta_p$, $\delta_P$ to obtain the result we are after.

Inserting the exact solution into eq. \eqref{eq:thm_perturbed_system} we obtain:
\begin{align*}
q(t_0 + c_i h) &= q_0 + h \sum_{j = 1}^s a_{i j} f(q(t_0 + c_j h),p(t_0 + c_j h)) + h\delta_{Q,i}\\
&= q_0 + h \sum_{j = 1}^s a_{i j} \dot{q}(t_0 + c_i h) + h\delta_{Q,i}\nonumber\\
p(t_0 + c_i h) &= p_0 + h \sum_{j = 1}^s a_{i j} g(q(t_0 + c_j h),p(t_0 + c_j h),\lambda(t_0 + c_j h)) + h\delta_{p,i}\nonumber\\
&= p_0 + h \sum_{j = 1}^s a_{i j} \dot{p}(t_0 + c_i h) + h\delta_{p,i}\nonumber\\
p(t_0 + c_i h) &= p_0 + h \sum_{j = 1}^s \hat{a}_{i j} g(q(t_0 + c_j h),p(t_0 + c_j h),\lambda(t_0 + c_j h)) + h\delta_{P,i}\nonumber\\
&= p_0 + h \sum_{j = 1}^s \hat{a}_{i j} \dot{p}(t_0 + c_i h) + h\delta_{P,i}\nonumber
\end{align*}
\begin{equation*}
q(t_0 + c_i h) = q(t_0) + h \sum_{j=1}^{s} a_{i j} f(y(t_0 + c_j h), z(t_0 + c_j h)) + h \delta_i
\end{equation*}

Now, expanding in Taylor series about $t_0$ and taking into account that:
\begin{equation*}
y(x_0 + c_i h) = y(x_0) + \sum_{j = 1}^m \frac{1}{j!} y^{(j)}(x_0) c_i^j h^j + \mathcal{O}(h^{m+1})
\end{equation*}
we get:
\begin{align*}
\delta_{Q,i} = \frac{h^{q} q^{(q+1)}(x_0)}{q!} \left(\frac{c_i^{q+1}}{q+1} - \sum_{j=1}^{s} a_{i j} c_j^q\right) + \mathcal{O}(h^{q+1})\\
\delta_{p,i} = \frac{h^{q} p^{(q+1)}(x_0)}{q!} \left(\frac{c_i^{q+1}}{q+1} - \sum_{j=1}^{s} a_{i j} c_j^q\right) + \mathcal{O}(h^{q+1})\nonumber\\
\delta_{P,i} = \frac{h^{\hat{q}} p^{(\hat{q}+1)}(x_0)}{\hat{q}!} \left(\frac{c_i^{\hat{q}+1}}{\hat{q}+1} - \sum_{j=1}^{s} \hat{a}_{i j} c_j^{\hat{q}}\right) + \mathcal{O}(h^{\hat{q}+1})\nonumber
\end{align*}

Finally, we should be careful to note that according to eq. \eqref{eq:proof_perturbation_y} $\delta_{P,i}$ enters in $\Delta Q_i$ and $\Delta p_i$ multiplied by $A$ so we may invoke $C\hat{C}(Q)$. Thus, we have:
\begin{align*}
\left\Vert \Delta Q_i \right\Vert &\leq C \left( h^{\min(\kappa + 2, q+1, \max(\hat{q} + 2, Q + 2))}\right)\\
\left\Vert \Delta p_i \right\Vert &\leq C \left( h^{\min(\kappa + 2, q+1, \max(\hat{q} + 2, Q + 2))}\right)\\
\left\Vert \Delta P_i \right\Vert &\leq C \left( h^{\min(\kappa + 1, q+1, \hat{q} + 1)}\right)\\
\left\Vert \Delta \Lambda_i \right\Vert &\leq C \left( h^{\min(\kappa, q)}\right)
\end{align*}
which proves our lemma.
\end{proof}

\begin{remark}
For the Lobatto IIIA-B methods we have that $\hat{q} + 2 = Q = q = s$ and this result implies that:
\begin{equation*}
\begin{array}{rlcrl}
\left\Vert \Delta Q_i \right\Vert &= \mathcal{O}(h^{\min(\kappa + 2, s+1)}), &&\left\Vert \Delta P_i \right\Vert &= \mathcal{O}(h^{\min(\kappa + 1, s-1)}),\\
\left\Vert \Delta p_i \right\Vert &= \mathcal{O}(h^{\min(\kappa + 2, s+1)}), &&\left\Vert \Delta \Lambda_i \right\Vert &= \mathcal{O}(h^{\min(\kappa, s)}).
\end{array}
\end{equation*}
\end{remark}


For the development of the main theorem, on which the results of error and convergence rest, we will need the following definitions. 

\subsection{$R$-strings}
\noindent\textbf{$R$-string.} An $R$-string $\gamma$ of dimension $\dim \gamma = s$ is an ordered list of $s$ numbers $(\gamma_{(1)},...,\gamma_{(s)})$, where $\gamma_{(i)} \in \mathbb{N}_0$.

\noindent\textbf{Irreducible $R$-string.} An irreducible $R$-string $\gamma$ of $\dim \gamma = s$, is such that for $1 < i < s$ even, $\gamma_{(i)}$ and $\gamma_{(i+1)}$ are not simultaneously zero.

For our purposes an $R$-string $\gamma$ can be used as multi-index provided it is irreducible. They will appear in the terms:
\begin{equation*}
R_{\gamma} = \tilde{C}^{\gamma_{(1)}} \tilde{A}^{-1} \left[ \prod_{i = 2}^{\dim \gamma - 2}\tilde{C}^{\gamma_{(i)}} \tilde{A} \tilde{C}^{\gamma_{(i+1)}} \tilde{A}^{-1} \right] \tilde{C}^{\gamma_{(\dim \gamma)}}
\end{equation*}
of a certain Taylor expansion which play a crucial role in the next theorem that we prove.

\subsubsection{$R$-string operations}
\noindent\textbf{Left appending.} Given an irreducible $R$-string $\gamma$ of $\dim \gamma = s$ such that $\gamma_{(1)} \neq 0$, left appending gives a new $R$-string $\gamma' = (0,0,\gamma_{(1)},...,\gamma_{s)})$ of $\dim \gamma' = s+2$.

\noindent\textbf{Right appending.} Given an irreducible $R$-string $\gamma$ of $\dim \gamma = s$ such that $\gamma_{(s)} \neq 0$, right appending gives a new $R$-string $\gamma' = (\gamma_{(1)},...,\gamma_{(s)},0,0)$ of $\dim \gamma' = s+2$.

\noindent\textbf{Insertion.} For $1 \leq i < s$ odd, given an irreducible $R$-string $\gamma$ such that $\gamma_{(i)} \neq 0 \neq \gamma_{(i+1)}$, insertion gives a new $R$-string $$\gamma' = (\gamma_{(1)},...,\gamma_{(i)},0,0,\gamma_{(i+1)},...,\gamma_{(s)})$$ of $\dim \gamma' = s+2$.

\noindent\textbf{Left splitting.} For $1 \leq i \leq s$, given an irreducible $R$-string $\gamma$ such that $\gamma_{(i)} < 1$, left splitting gives a new $R$-string $\gamma' = (\gamma_{(1)},...,\gamma_{(i-1)},1,0,\gamma_{(i)}-1,...,\gamma_{(s)})$ of $\dim \gamma' = s+2$.

\noindent\textbf{Right splitting.} For $1 \leq i \leq s$, given an irreducible $s$-string $\gamma$ such that $\gamma_{(i)} < 1$, right splitting gives a new $R$-string $\gamma' = (\gamma_{(1)},...,\gamma_{(i)}-1,0,1,\gamma_{(i+1)},$ $...,\gamma_{(s)})$ of $\dim \gamma' = s+2$.

\noindent\textbf{Capping.} Given an irreducible $s$-string $\gamma$ such that $\gamma_{(1)} \neq 0 \neq \gamma_{(s)}$, capping gives a new $R$-string $\gamma' = (0,\gamma_{(1)},...,\gamma_{(s)},0)$ of $\dim \gamma' = s+2$.

\noindent\textbf{Left Diffusion.} For $1 < i \leq s$, given an irreducible $s$-string $\gamma$ such that $\gamma_{(i)} < 1$, diffusion gives a new $R$-string $\gamma' = (\gamma_{(1)},...,\gamma_{(i-1)}+1,\gamma_{(i)}-1,...,\gamma_{(s)})$ of $\dim \gamma' = s$.

\noindent\textbf{Right Diffusion.} For $1 \leq i < s$, given an irreducible $s$-string $\gamma$ such that $\gamma_{(i)} < 1$, diffusion gives a new $R$-string $\gamma' = (\gamma_{(1)},...,\gamma_{(i)}-1,\gamma_{(i+1)}+1,...,\gamma_{(s)})$ of $\dim \gamma' = s$.

Clearly all of these operations preserve irreducibility. Note that insertion can be absorbed into the splitting operations if we let $\gamma_{(i)} \leq 1$, but then we would need to add provisions so that the extended splitting operations preserve irreducibility.

\subsubsection{$R$-string classes and elementary $R$-strings}
We say that given two irreducible $R$-strings $\gamma$ and $\delta$ with $\left\vert \gamma \right\vert = \left\vert \delta \right\vert$ but $\dim \gamma$ and $\dim \delta$ not necessarily equal are in the same \emph{class} iff $R_{\gamma} = R_{\delta}$.

If they belong to the same class, then one can be derived from the other following certain rules. For a given order there are as many unique $R$ coefficients as \emph{elementary $R$-strings}. For each order, an elementary string is the shortest irreducible $R$-string (of even dimension) such that it cannot be derived from another elementary $R$-string via splitting, appending or insertion. The $n$-th order has $2^n$ elementary strings. The simplest elementary $R$-strings of a given order are of dimension $2$, i.e., $R$-strings $\gamma = (\gamma_{(1)},\gamma_{(2)})$ such that $\gamma_{(1)} + \gamma_{(2)} = n$, of which there are $n + 1$. The rest of the elementary strings can be obtained from these via diffusion and capping.

For $n = 3$ we know there are $2^3 = 8$ elementary $R$-strings. We have the following elementary $R$-strings of dimension $2$: $(0, 3)$, $(1,2)$, $(2,1)$, $(3,0)$. We may obtain the remaining four by capping and diffusion. First we may cap $(1,2)$ and $(2,1)$ to obtain $(0,1,2,0)$ and $(0,2,1,0)$ respectively. From these new elementary $R$-strings of dimension $4$ we obtain $(0,1,1,1)$ and $(1,1,1,0)$.

For $n = 4$ we know there are $2^4 = 16$ elementary strings. We have the following elementary $R$-strings of dimension 2: $(0,4)$, $(1,3)$, $(2,2)$, $(3,1)$, $(4,0)$. First we may cap $(1,3)$, $(2,2)$ and $(3,1)$ to obtain $(0,1,3,0)$, $(0,2,2,0)$ and $(0,3,1,0)$ respectively. From these new elementary $R$-strings we obtain $(0,1,3,0)$, $(0,1,2,1)$, $(0,1,1,2)$, $(0,2,2,0)$, $(1,1,1,1)$, $(1,2,1,0)$, $(2,1,1,0)$. Finally we may cap again the only $R$-string of dimension $4$ that admits capping, $(1,1,1,1)$, obtaining $(0,1,1,1,1,0)$.

As an example of derivation of strings of a class, let us take $(3,0)$. Applying the left appending operation we can obtain $(0,0,3,0)$. Applying the splitting operation to $(3,0)$ we obtain $(2,0,1,0)$ and $(1,0,2,0)$. The rest of the derived strings of the class can be obtained via further appending and/or splitting: $(0,0,2,0,1,0)$, $(0,0,1,0,2,0)$, $(1,0,1,0,1,0)$ and $(0,0,1,0,1,0,1,0)$.
\begin{tiny}
	\begin{center}
		\begin{tikzcd}[column sep=normal, row sep=huge]
				(3, 0) \arrow[r] \arrow[dr] \arrow[ddr] & (1, 0, 2, 0) \arrow[r] \arrow[dr] & (1, 0, 1, 0, 1, 0) \arrow[dr]&\\
				                         & (0, 0, 3, 0) \arrow[r] \arrow[dr] & (0, 0, 1, 0, 2, 0) \arrow[r] & (0, 0, 1, 0, 1, 0, 1, 0)\\
				                         & (2, 0, 1, 0) \arrow[r] & (0, 0, 2, 0, 1, 0) \arrow[ur] &
		\end{tikzcd}
	\end{center}
\end{tiny}
Another example where we may use insertion is $(0,1,1,1)$, which yields $(0,1,0,0,1,1)$. The rest of the elements of the class are obtained via appending $(0,1,1,1,0,0)$ and $(0,1,0,0,1,1,0,0)$.
\begin{tiny}
	\begin{center}
		\begin{tikzcd}[column sep=normal, row sep=huge]
				(0,1,1,1) \arrow[r] \arrow[dr] & (0,1,0,0,1,1) \arrow[r] & (0,1,0,0,1,1,0,0)\\
				                               & (0,1,1,1,0,0)           & 
		\end{tikzcd}
	\end{center}
\end{tiny}

\begin{theorem}
\label{thm:RK_total_error}
In addition to the hypotheses of theorem \ref{thm:impact_of_perturbations}, suppose that $A$ and $\hat{A}$ are simplectic conjugated and, $C(q)$, $\hat{C}(r)$, $D(r)$, $\hat{D}(q)$, $D\hat{D}(p-r)$, $\hat{D}D(p-q)$ and \ref{itm:H3} hold. Furthermore, $(D_1 \phi \cdot f)(q_0,p_0) + (D_2 \phi \cdot g)(q_0,p_0,\lambda_0) = \mathcal{O}(h^\kappa)$, with $\kappa \geq 1$. Then we have:
\begin{align}
\left\Vert \Delta Q_s \right\Vert &= \Delta q_0 \label{eq:RK_total_error_Q}\\
&+ \mathcal{O}\left( h \left\Vert \Delta p_0 \right\Vert + h^{m + 2} \left\Vert \Delta \lambda_0 \right\Vert + h \left\Vert \delta_Q \right\Vert + h^2 \left\Vert \delta_p \right\Vert + h^2 \left\Vert \delta_P \right\Vert + h \left\Vert \theta \right\Vert \right)\nonumber\\
\left\Vert \Delta p_s \right\Vert &= \Pi_{1,0}(q_0, p_0, \lambda_0) \Delta q_0 + \Pi_{2,0}(q_0, p_0, \lambda_0) \Delta p_0\label{eq:RK_total_error_P}\\
&+ \mathcal{O}\left( h^{m + 2} \left\Vert \Delta \lambda_0 \right\Vert + h^2 \left\Vert \delta_Q \right\Vert + h \left\Vert \delta_p \right\Vert + h^2 \left\Vert \delta_P \right\Vert + \left\Vert \theta \right\Vert \right)\nonumber\\
\left\Vert \Delta \Lambda_s \right\Vert &= \mathcal{R}_A(\infty) \Delta \lambda_0\label{eq:RK_total_error_Lam}\\
&+ \mathcal{O}\left( \left\Vert \Delta q_0 \right\Vert + \left\Vert \Delta p_0 \right\Vert + h \left\Vert \Delta \lambda_0 \right\Vert + \left\Vert \delta_Q \right\Vert + \left\Vert \delta_p \right\Vert + \left\Vert \theta \right\Vert/h \right)\nonumber
\end{align}
where $m = \min(\kappa - 1, q - 1, r, p - q, p - r)$, $\mathcal{R}_A$ is the stability function of the method $A$, $\Pi_{1,0} = - D_3 g (D_2 \phi D_3 g) D_1 \phi$ and $\Pi_{2,0} = I_n - D_3 g (D_2 \phi D_3 g) D_2 \phi$.
\end{theorem}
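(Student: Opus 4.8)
The plan is to propagate the linearised perturbation equations obtained in the proof of Theorem~\ref{thm:impact_of_perturbations} all the way to the final internal stage $i = s$. By hypothesis \ref{itm:H3} together with the symplectic conjugation relation $b_j = \hat{b}_j$ and the prescription $\Lambda_s = \lambda_1$, the $s$-th stage values coincide with the updated step values $(q_1,p_1,\lambda_1)$; in particular the end-of-step momentum is the $A$-propagated $p_s = p_0 + h\sum_j b_j W_j$ (not the $\hat{A}$-propagated $P_s$), since $a_{sj} = b_j = \hat{b}_j$. Hence it suffices to read off $\Delta Q_s$, $\Delta p_s$ and $\Delta \Lambda_s$. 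The crude bounds of Theorem~\ref{thm:impact_of_perturbations} are not sharp enough here: most notably they only yield $\mathcal{O}(h\Vert\Delta\lambda_0\Vert)$ for the contribution of $\Delta\lambda_0$ to $\Delta Q_s$ and $\Delta p_s$, whereas we must reach $\mathcal{O}(h^{m+2}\Vert\Delta\lambda_0\Vert)$. The heart of the argument is therefore to expand the propagated quantities as power series in $h$ and to show that the simplifying assumptions force the dangerous intermediate terms to cancel up to order $m$.

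First I would solve the coupled linear system \eqref{eq:proof_perturbation_Y}, \eqref{eq:proof_perturbation_y}, \eqref{eq:proof_perturbation_shorthand_constraint} for $\Delta Y$, $\Delta y$ and $\Delta\tilde{\Lambda}$ by a Neumann expansion in $h$, continuing the computation begun in \eqref{eq:proof_perturbation_Lambda_error}. The repeated application of $\tilde{A}$, of $\tilde{A}^{-1}$ (coming from the inversion of $D_2\tilde{\phi}(\tilde{A}\otimes I_n)D_3\tilde{g}$), and of the diagonal node matrix $\tilde{C}$ that appears when $\Delta\tilde{\Lambda}$ is reinserted generates precisely the products $R_\gamma$ indexed by the irreducible $R$-strings defined above. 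This is what makes the $R$-string machinery indispensable: every monomial of the expansion is labelled by an $R$-string, and two strings in the same class contribute the same operator $R_\gamma$.

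The key step is then to invoke the simplifying assumptions. The relations $C(q)$, $\hat{C}(r)$, $D(r)$, $\hat{D}(q)$, $D\hat{D}(p-r)$ and $\hat{D}D(p-q)$ are exactly the identities that collapse each relevant $R_\gamma$ to the scalar value dictated by its elementary $R$-string, up to a remainder of order $m = \min(\kappa-1,q-1,r,p-q,p-r)$. Grouping monomials by class and using the $R$-string operations (appending, insertion, splitting, diffusion, capping) to enumerate, within each class, all the strings whose coefficients must agree, one shows that the net coefficient of $\Delta\lambda_0$ telescopes, so that its contribution to $\Delta Q_s$ and $\Delta p_s$ drops to $\mathcal{O}(h^{m+2}\Vert\Delta\lambda_0\Vert)$. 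The reason this improvement is available for $\Delta Q_s$ and $\Delta p_s$, but not for the intermediate $P$-stages, is precisely that these quantities are propagated by $A$, whose coefficients satisfy the strong assumptions $C(q)$ and $D(r)$.

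Finally I would read off the leading-order coefficients. For $\Delta\Lambda_s$ the recursion is driven by $\tilde{A}^{-1}$, and the propagation from $\Lambda_1 = \lambda_0$ to $\Lambda_s = \lambda_1$ in the index-$2$ (singular, $h\to 0$) limit is governed by the stability function at infinity; using the reduced expression $R(z) = e_s^T(\mathrm{Id} - zA)^{-1}\mathds{1}$ valid under \ref{itm:H3} and letting $z\to\infty$ yields the coefficient $\mathcal{R}_A(\infty)$ of $\Delta\lambda_0$. For $\Delta p_s$, the projector $\Pi_A$ introduced in the proof of Theorem~\ref{thm:impact_of_perturbations} reduces at $h = 0$ to the constraint-manifold projector whose blocks are exactly $\Pi_{1,0}$ and $\Pi_{2,0}$, giving the stated leading coefficients of $\Delta q_0$ and $\Delta p_0$. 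For $\Delta Q_s$, the differential variable $q$ carries no direct algebraic coupling, so its leading coefficient of $\Delta q_0$ is the identity. The \textbf{main obstacle} I anticipate is the class-by-class cancellation of the third paragraph: controlling the combinatorics of the $R_\gamma$ and verifying that the interaction of the two distinct coefficient matrices $A$ and $\hat{A}$, through $D\hat{D}(p-r)$ and $\hat{D}D(p-q)$, produces cancellation exactly to order $m$ and not one order less. This two-matrix interaction is the genuine novelty over the non-partitioned argument of \cite{Jay93}.
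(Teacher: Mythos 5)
Your overall architecture coincides with the paper's: solve \eqref{eq:proof_perturbation_Y} for $\Delta Y$, substitute into \eqref{eq:proof_perturbation_y}, isolate the terms carrying $\Delta\Lambda_1$, expand everything as a power series in $h$ with $R$-string bookkeeping, and read off the leading coefficients ($\mathcal{R}_A(\infty)$ for $\Delta\Lambda_s$ from the reduced stability function under \ref{itm:H3}, the blocks $\Pi_{1,0}$, $\Pi_{2,0}$ of the $h=0$ limit of the projector for $\Delta p_s$, and the identity for $\Delta Q_s$). Your observation that \ref{itm:H3} together with $b=\hat{b}$ identifies the $A$-propagated stage $p_s$ with $p_1$ is also the paper's reason for expanding $\Delta p_s$ rather than $\Delta P_s$.

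The gap is in your key cancellation step. You assert that the simplifying assumptions ``collapse each relevant $R_\gamma$ to the scalar value dictated by its elementary $R$-string'' and that the coefficient of $\Delta\lambda_0$ then telescopes class by class. Taken literally this stalls: two strings in the same class satisfy $R_\gamma = R_{\gamma'}$ \emph{by definition}, with no order conditions involved, and the conditions $C$, $D$, $D\hat{D}$, $\hat{D}D$ are $b$-weighted sum identities that cannot act on an operator $R_\gamma$ sitting in the middle of a noncommutative product. The paper needs two logically distinct devices. First, a structural sign cancellation inside the von Neumann series (Proposition \ref{prop:R_string_elimination}): right appending flips the sign $(-1)^{\dim\gamma/2}$ while leaving both $R_\gamma$ and the product $S_\gamma D_3\tilde{g}_0$ unchanged, so every class whose elementary string lacks a trailing zero cancels identically once $P_A D_z F_1$ is formed; this step uses no simplifying assumptions at all. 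Second, the surviving terms, after left contraction with $e_s^T$ (so that $e_s^T A = b$ by \ref{itm:H3}), assemble into the scalar symbol combinations $e_s^T C^{\alpha} A \bigl(\prod_i M_i\bigr)(\hat{A}_1 - \hat{A} A^{-} A_1)$ and $e_s^T C^{\alpha} A \bigl(\prod_i N_i\bigr) C A^{-} A_1$, and it is only at this stage that $D(r)$, $\hat{D}(q)$, $D\hat{D}(p-r)$, $\hat{D}D(p-q)$ enter (Theorem \ref{thm:vanishing_symbol_combinations}) and produce the exponent $m$. Completing your proposal would require both ingredients, plus a classification of which symbol combinations actually occur in the resolvent coupling the $\Delta Q$ and $\Delta P$ blocks --- the part the paper handles by a CAS-assisted enumeration and which your sketch leaves open.
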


\begin{proof}
This proof follows closely that of \cite{Jay93}, theorem 4.4. The idea is to take the results from theorem \ref{thm:impact_of_perturbations} and perform a Taylor expansion of each term, focusing on the $s$-th component. Just as in \cite{Jay93}, the important result here is the $h^{m+2}$ factor in front of $\left\Vert\Delta\lambda_0\right\Vert$, which means that we need to pay special attention to $\Delta \Lambda_1$.

In our case $\Delta \Lambda_s$ coincides with Jay's $\Delta Z_s$ without changes. The differences appear in the rest of the components, where having two sets of Runge-Kutta coefficients makes the Taylor expansion of the terms and the tracking of each component much more difficult. We want $\Delta Q_s$ and $\Delta p_s$, as $\Delta P_s$ is not an external stage / nodal value. Thus, we will need to expand eq. \eqref{eq:proof_perturbation_y}. Unfortunately this depends on eq. \eqref{eq:proof_perturbation_Y}. Let us first solve this latter equation for $\Delta Y$:
\begin{align*}
\Delta Y &= \left(I - h \left(\leftidx{^A}{\phantom{.}}{_{\hat{A}}}\right) P_{A} D_y F\right)^{-1}\\
&\times \left[ \Pi_{\hat{A}}^{A} \left( \Delta \eta + h \delta_y \right) + h \left(\leftidx{^A}{\phantom{.}}{_{\hat{A}}}\right) P_{A} D_z F_1 \Delta \Lambda_1 \vphantom{\left(\leftidx{^0}{\phantom{.}}{_{\left(D_2 \phi A D_3 g\right)^{-}}}\right)}\right.\nonumber\\
&- \left.\left(\leftidx{^A}{\phantom{.}}{_{\hat{A}}}\right) D_z \tilde{F} \left(\leftidx{^0}{\phantom{.}}{_{\left(D_2 \phi A D_3 g\right)^{-}}}\right)\theta + h \left(\delta_Y - \delta_y\right)\right]\nonumber
\end{align*}

We then need to insert this in eq. \eqref{eq:proof_perturbation_y}. From here on we will forget about all terms except for the ones with $\Delta \Lambda_1$, as the rest vary little from what was found in Theorem \ref{thm:impact_of_perturbations} and they can be easily obtained, thus barring the need to carry them around any longer.
\begin{align*}
\Delta y &= h \Pi_A \left(\leftidx{^A}{\phantom{.}}{_{A}}\right) D_z F_1 \Delta \Lambda_1\\
&+ h^2 \Pi_A \left(\leftidx{^A}{\phantom{.}}{_{A}}\right) D_y F \left(I - h \left(\leftidx{^A}{\phantom{.}}{_{\hat{A}}}\right) P_{A} D_y F\right)^{-1} \left(\leftidx{^A}{\phantom{.}}{_{\hat{A}}}\right) P_{A} D_z F_1 \Delta \Lambda_1\nonumber\\
&+ ...\nonumber
\end{align*}

The first term can be expanded just as in \cite{Jay93}, as there is no $\hat{A}$ involved, giving us $\mathcal{O}(h^{m+2} \left\Vert\Delta\lambda_0\right\Vert)$ as expected. The second term is where the real changes appear. Let us begin with the right-most part of the term, $\left(\leftidx{^A}{\phantom{.}}{_{\hat{A}}}\right) P_{A} D_z F_1 \Delta \Lambda_1$. We have that:
\begin{align*}
P_{A} &= \left[\begin{smallmatrix}
1 \otimes I_{n} &                     0 &               0 &                     0\\
              0 & I_{s-1} \otimes I_{n} &               0 &                     0\\
              0 &                     0 & 1 \otimes I_{n} &                     0\\
-\tilde{X}_1 \left(\tilde{A}_1 \otimes I_{n}\right) & -\tilde{X}_1\left(\tilde{A} \otimes I_{n}\right) & -\tilde{X}_2 \left(\tilde{A}_1 \otimes I_{n}\right) & I_{s-1} \otimes I_{n} - \tilde{X}_2\left(\tilde{A} \otimes I_{n}\right)
\end{smallmatrix}
\right]
\end{align*}
where $\tilde{X}_i$ was already defined in eq. \eqref{eq:def_X_i} and where we have used the fact that $A_1^1$ is zero and $A^1$ is a zero vector.

For the product $P_{A} D_z F_1 \Delta \Lambda_1$ we only need to worry about the component $I_{s-1} \otimes I_{n} - \tilde{X}_2\left(\tilde{A} \otimes I_{n}\right)$, as the rest do not connect with $\Delta \Lambda_1$.
\begin{align}
&\left(\leftidx{^A}{\phantom{.}}{_{\hat{A}}}\right) P_{A} D_z F_1 \Delta \Lambda_1 = \left[\begin{smallmatrix}
A_      1^1 \otimes I_{n} &       A^1 \otimes I_{n} &                                 0 &                             0\\
\tilde{A}_1 \otimes I_{n} & \tilde{A} \otimes I_{n} &                                 0 &                             0\\
                        0 &                       0 &         \hat{A}_1^1 \otimes I_{n} &       \hat{A}^1 \otimes I_{n}\\
                        0 &                       0 &   \hat{\tilde{A}}_1 \otimes I_{n} & \hat{\tilde{A}} \otimes I_{n}
\end{smallmatrix}
\right]\label{eq:proof_RK_total_APADFDL}\\
&\times \left[\begin{smallmatrix}
1 \otimes I_{n} &                     0 &               0 &                     0\\
              0 & I_{s-1} \otimes I_{n} &               0 &                     0\\
              0 &                     0 & 1 \otimes I_{n} &                     0\\
-\tilde{X}_1 \left(\tilde{A}_1 \otimes I_{n}\right) & -\tilde{X}_1\left(\tilde{A} \otimes I_{n}\right) & -\tilde{X}_2 \left(\tilde{A}_1 \otimes I_{n}\right) & I_{s-1} \otimes I_{n} - \tilde{X}_2\left(\tilde{A} \otimes I_{n}\right)
\end{smallmatrix}
\right] \left[\begin{smallmatrix}
                               0\\
                               0\\
D_3 \tilde{g}_0 \Delta \Lambda_1\\
                               0
\end{smallmatrix}
\right]\nonumber
\end{align}

Inside $\tilde{X}_2$ we find the product $D_2 \tilde{\phi} \left(\tilde{A} \otimes I_{n}\right) D_3 \tilde{g} \left(\tilde{A} \otimes I_{m}\right)^{-1}$ composed of:
\begin{equation*}
D_2 \tilde{\phi} = \sum_{i = 0}^\omega h^i \tilde{C}^i \otimes D_2 \tilde{\phi}_{i} + \mathcal{O}(h^{\omega + 1})
\end{equation*}
\begin{equation*}
\left(\tilde{A} \otimes I_{n}\right) D_3 \tilde{g} \left(\tilde{A} \otimes I_{m}\right)^{-1} = \sum_{i = 0}^\omega h^i \tilde{A} \tilde{C}^i \tilde{A}^{-1} \otimes D_3 \tilde{g}_{i} + \mathcal{O}(h^{\omega + 1})
\end{equation*}
which results in:
\begin{equation*}
D_2 \tilde{\phi} \left(\tilde{A} \otimes I_{n}\right) D_3 \tilde{g} \left(\tilde{A} \otimes I_{m}\right)^{-1} = \sum_{0 \leq i + j \leq \omega}^\omega h^{i + j} \tilde{C}^i \tilde{A} \tilde{C}^j \tilde{A}^{-1} \otimes D_2 \tilde{\phi}_i D_3 \tilde{g}_j + \mathcal{O}(h^{\omega + 1})
\end{equation*}

Inversion of this product can be carried out as a Taylor expansion resulting in a so-called von Neumann series $(I - T)^{-1} = \sum_{i=0}^{\infty} T^i$. Let us rewrite the former expression:
\begin{align*}
&D_2 \tilde{\phi} \left(\tilde{A} \otimes I_{n}\right) D_3 \tilde{g} \left(\tilde{A} \otimes I_{m}\right)^{-1}\\
&= \left( I_{s-1} \otimes I_n + \sum_{1 < i + j \leq \omega}^\omega h^{i + j} \tilde{C}^i \tilde{A} \tilde{C}^j \tilde{A}^{-1} \otimes D_2 \tilde{\phi}_i D_3 \tilde{g}_j \left(D_2 \tilde{\phi}_0 D_3 \tilde{g}_0\right)^{-1}\right)\\
&\times \left( I_{s-1} \otimes D_2 \tilde{\phi}_0 D_3 \tilde{g}_0\right) + \mathcal{O}(h^{\omega + 1})\\
&= \left( I_{s-1} \otimes I_n + \sum_{1 < i + j \leq \omega}^\omega h^{i + j} \tilde{C}^i \tilde{A} \tilde{C}^j \tilde{A}^{-1} \otimes D_2 \tilde{\phi}_i D_3 \tilde{g}_j \vartriangle\right)\\
&\times \left( I_{s-1} \otimes \triangledown\right) + \mathcal{O}(h^{\omega + 1})\\
&= \left( I_{s-1} \otimes I_n - \sum_{1 < \left\vert\alpha\right\vert}^\omega - h^{\left\vert \alpha\right\vert} N_{\alpha} \otimes M_{\alpha} \right) \times \left( I_{s-1} \otimes \triangledown\right) + \mathcal{O}(h^{\omega + 1})
\end{align*}
with $\alpha$ multi-index of $\dim\alpha = 2$. For instance, for $\left\vert\alpha\right\vert = 3$ we have $\alpha_1 = (3,0), \alpha_2 = (2,1), \alpha_3 = (1,2), \alpha_1 = (0,3)$, and the corresponding terms $N_{\alpha} \otimes M_{\alpha}$ are:
\begin{align*}
N_{(3,0)} \otimes M_{(3,0)} &= \tilde{C}^3 \otimes D_2 \tilde{\phi}_3 D_3 \tilde{g}_0 \vartriangle\\
N_{(2,1)} \otimes M_{(2,1)} &= \tilde{C}^2 \tilde{A} \tilde{C} \tilde{A}^{-1} \otimes D_2 \tilde{\phi}_2 D_3 \tilde{g}_1 \vartriangle\\
N_{(1,2)} \otimes M_{(1,2)} &= \tilde{C} \tilde{A} \tilde{C}^2 \tilde{A}^{-1} \otimes D_2 \tilde{\phi}_1 D_3 \tilde{g}_2 \vartriangle\\
N_{(0,3)} \otimes M_{(0,3)} &= \tilde{A} \tilde{C}^3 \tilde{A}^{-1} \otimes D_2 \tilde{\phi}_0 D_3 \tilde{g}_3 \vartriangle
\end{align*}

We have also made use of the short-hand notation $\triangledown = D_2 \tilde{\phi}_0 D_3 \tilde{g}_0$ and $\vartriangle = \left(D_2 \tilde{\phi}_0 D_3 \tilde{g}_0\right)^{-1}$.

Paying attention to the non-commutativity of the series we obtain:
\begin{align*}
&\left( D_2 \tilde{\phi} \left(\tilde{A} \otimes I_{n}\right) D_3 \tilde{g} \left(\tilde{A} \otimes I_{m}\right)^{-1}\right)^{-1}\\
&= \left( I_{s-1} \otimes \vartriangle\right) \times \left( \sum_{\left\vert \beta\right\vert = 0}^\omega (-1)^{\frac{\dim \beta}{2}} h^{\left\vert \beta\right\vert} N_{\beta} \otimes M_{\beta} \right) + \mathcal{O}(h^{\omega + 1})
\end{align*}
with $\beta$ multi-index of $\dim\beta \leq 2 \omega$, even, and such that for $i$ \textbf{odd} $\beta_{(i)}$ and $\beta_{(i+1)}$ are never both 0. For instance, for $\left\vert\beta\right\vert = 2$ we have, for $\dim \beta = 2$, $\beta_{1,1} = (2,0), \beta_{1,2} = (1,1), \beta_{1,3} = (0,2)$, and for $\dim \beta = 4$ we have $\beta_{2,1} = (1,0,1,0), \beta_{2,2} = (1,0,0,1), \beta_{2,3} = (0,1,1,0), \beta_{2,4} = (0,1,0,1)$. $(0,0,1,1)$ and $(1,1,0,0)$ are not allowed as they contain two contiguous zeros in odd and even position. Some examples of the corresponding terms $N_{\beta} \otimes M_{\beta}$ are:
\begin{align*}
N_{(1,1)} \otimes M_{(1,1)} &= \tilde{C} \tilde{A} \tilde{C} \tilde{A}^{-1} \otimes D_2 \tilde{\phi}_1 D_3 \tilde{g}_1 \vartriangle\\
N_{(0,1,1,0)} \otimes M_{(0,1,1,0)} &= \tilde{A} \tilde{C} \tilde{A}^{-1} \tilde{C} \otimes D_2 \tilde{\phi}_0 D_3 \tilde{g}_1 \vartriangle D_2 \tilde{\phi}_1 D_3 \tilde{g}_0 \vartriangle
\end{align*}

We need to include the restriction on elements such as $(0,0,1,1)$ as a double-counting prevention of sorts. We can understand this by checking what its associated $M_{(0,0,1,1)}$ would look like: $$D_2 \tilde{\phi}_0 D_3 \tilde{g}_0 \vartriangle D_2 \tilde{\phi}_1 D_3 \tilde{g}_1 \vartriangle = \triangledown \vartriangle D_2 \tilde{\phi}_1 D_3 \tilde{g}_1 \vartriangle = D_2 \tilde{\phi}_1 D_3 \tilde{g}_1 \vartriangle = M_{(1,1)}\; .$$

Moving on to the next computation, we sandwich the expression between $D_3 \tilde{g} \left(\tilde{A} \otimes I_{m}\right)$ and $D_2 \tilde{\phi}$ to obtain:
\begin{align*}
\tilde{X}_2 &= D_3 \tilde{g} \left(\tilde{A} \otimes I_{m}\right) \left( D_2 \tilde{\phi} \left(\tilde{A} \otimes I_{n}\right) D_3 \tilde{g} \left(\tilde{A} \otimes I_{m}\right)^{-1}\right)^{-1} D_2 \tilde{\phi}\\
&= \left( \sum_{\left\vert \gamma\right\vert = 0}^\omega (-1)^{\frac{\dim \gamma}{2} - 1} h^{\left\vert \gamma\right\vert} R_{\gamma} \otimes S_{\gamma} \right) + \mathcal{O}(h^{\omega + 1})
\end{align*}
where:
\begin{align*}
R_{\gamma} &= \tilde{C}^{\gamma_{(1)}} \tilde{A}^{-1} \left[ \prod_{i = 2}^{\dim \gamma - 2}\tilde{C}^{\gamma_{(i)}} \tilde{A} \tilde{C}^{\gamma_{(i+1)}} \tilde{A}^{-1} \right] \tilde{C}^{\gamma_{(\dim \gamma)}}\\
S_{\gamma} &= D_3 \tilde{g}_{\gamma_{(1)}} \vartriangle \left[ \prod_{i = 2}^{\dim \gamma - 2} D_2 \tilde{\phi}_{\gamma_{(i)}} D_3 \tilde{g}_{\gamma_{(i+1)}} \vartriangle\right] D_2 \tilde{\phi}_{\gamma_{(\dim \gamma)}}
\end{align*}
with $\gamma$ multi-index of $\dim\gamma \leq 2 \omega$, even, and such that for $i$ \textbf{even} $\gamma_{(i)}$ and $\gamma_{(i+1)}$ are never both 0, i.e., $\gamma$ is an irreducible $R$-string.

This structure looks quite complicated as it is, and it does not seem to lend itself to easy groupings of symbol combinations $R_{\gamma}$. Nevertheless, it can be done with the help of the $R$-string classes we introduced before.

Once we have derived $\tilde{X}_2$ and essentially $P_A$, we can finally tackle the full product $\left(\leftidx{^A}{\phantom{.}}{_{\hat{A}}}\right) P_{A} D_z F_1 \Delta \Lambda_1$. If we perform the matrix multiplications in eq. \eqref{eq:proof_RK_total_APADFDL} we get:
\begin{equation*}
\left(\leftidx{^A}{\phantom{.}}{_{\hat{A}}}\right) P_{A} D_z F_1 \Delta \Lambda_1 = \left[\begin{smallmatrix}
0\\
0\\
\hat{A}^1_1 \otimes D_3 \tilde{g}_0 \Delta \Lambda_1\\
\hat{\tilde{A}}_1 \otimes D_3 \tilde{g}_0 \Delta \Lambda_1 - \left(\hat{\tilde{A}} \otimes I_{n}\right) \tilde{X}_2 \left(\tilde{A}_1 \otimes D_3 \tilde{g}_0 \Delta \Lambda_1\right)\\
\end{smallmatrix}
\right]
\end{equation*}

What is important here is that we are multiplying by $D_3 \tilde{g}_0$ on the right. In terms of strings this means appending one zero to the right, which makes a big part of the expansion vanish, as we will see in proposition \ref{prop:R_string_elimination}. This result gives us valuable information about the series expansion:
\begin{equation*}
\left(\Pi_{A, 0} + \mathcal{O}(h)\right) \left(\leftidx{^A}{\phantom{.}}{_{A}}\right) \left(D_y F_0 + \mathcal{O}(h) \right) \left(I - \mathcal{O}(h)\right)^{-1} \left(\leftidx{^A}{\phantom{.}}{_{\hat{A}}}\right) P_{A} D_z F_1 \Delta \Lambda_1
\end{equation*}

At order 0, for the last component we get that the combination
\begin{equation*}
e_{s}^T \left[\begin{array}{cc}
      A_1^1 &       A^1\\
\tilde{A}_1 & \tilde{A}
\end{array}
\right] \left(\left[\begin{array}{c}
        \hat{A}_1^1\\
  \hat{\tilde{A}}_1
\end{array}
\right] - \left[\begin{array}{c}
								 0\\
\hat{\tilde{A}} \tilde{A}^{-1} \tilde{A}_1
\end{array}
\right]\right) = 0
\end{equation*}
where the vector $e_{s}^T = (0,...,0,1)$, with $\dim e_{s} = s$. For a method satisfying \ref{itm:H3} we have that $e_{s}^T A = b$. Using the notation:
\begin{equation*}
A^{-} = \left[\begin{array}{cc}
      0_1^1 &            0^1\\
\tilde{0}_1 & \tilde{A}^{-1}
\end{array}
\right], \quad A_1 = \left[\begin{array}{c}
      A_1^1\\
\tilde{A}_1
\end{array}
\right], \quad \hat{A}_1 = \left[\begin{array}{c}
      \hat{A}_1^1\\
\hat{\tilde{A}}_1
\end{array}
\right]
\end{equation*}
we may write this expression in shorter form as $e_s^T A (\hat{A}_1 - \hat{A} A^{-} A_1)$.

At order $h$ we still do not have all the terms that arise from the expansion but we already have an interesting combination that must also vanish.
\begin{equation*}
e_{s}^T \left[\begin{array}{cc}
      A_1^1 &       A^1\\
\tilde{A}_1 & \tilde{A}
\end{array}
\right] \left[\begin{array}{cc}
      \hat{A}_1^1 &       \hat{A}^1\\
\hat{\tilde{A}}_1 & \hat{\tilde{A}}
\end{array}
\right]\left[\begin{array}{c}
				                  0\\
\tilde{C} \tilde{A}^{-1} \tilde{A}_1
\end{array}
\right] = 0
\end{equation*}
We can write this combination as $e_s^T A \hat{A} C A^{-} A_1$.

In fact the two vanishing combinations hint at the template for the rest of the vanishing combinations: $e_s^T ...\, A\, ...\, (\hat{A}_1 - \hat{A} A^{-} A_1)$ and $e_s^T ...\, A\, ...\, \hat{A}\, ...\, C A^{-} A_1$.

As we will see later, for all combinations there will always be at least one $\hat{A}$ (which the first template already includes) and one $A$, as can be readily seen below:
\begin{equation*}
\Pi_{A} \underline{\underline{\left(\leftidx{^A}{\phantom{.}}{_{A}}\right)}} D_y F \left(I - h \left(\leftidx{^A}{\phantom{.}}{_{\hat{A}}}\right) P_{A} D_y F\right)^{-1} \underline{\underline{\left(\leftidx{^A}{\phantom{.}}{_{\hat{A}}}\right)}} P_{A} D_z F_1 \Delta \Lambda_1
\end{equation*}
As we grow in order, up to order $n$, combinations of $A$, $\hat{A}$, $C$ and $A^{-}CA$ show up such that their number adds up to $n + 1$. These originate from $P_{A}$ itself, as well as $\Pi_{A}$, $D_y F$ and $\left(I - h \left(\leftidx{^A}{\phantom{.}}{_{\hat{A}}}\right) P_{A} D_y F\right)^{-1}$, as we will soon see.

With all the knowledge we got from our string analysis it can be shown that the expansion of $\left(\leftidx{^A}{\phantom{.}}{_{\hat{A}}}\right) P_{A} D_z F_1 \Delta \Lambda_1$ takes the form:
\begin{align*}
&(\hat{A}_1 - \hat{A} A^{-} A_1) \otimes D_3 g_0 \Delta \Lambda_1\\
&+ \sum_{\left\vert\rho\right\vert = 0}^{\omega} h^{\left\vert\rho\right\vert + 1} \left[\hat{A} \left(\prod_{i = 1}^{\dim \rho - 1} C^{\rho_i} A^{-} C^{\rho_{i+1}} A\right) C \otimes O_\rho \Delta \Lambda_1\right] + \mathcal{O}(h^{\omega + 1})\nonumber
\end{align*}
where $O_\rho$ is a term composed by multiplication of $\vartriangle$ and derivatives of $g$ and $\phi$ evaluated at the initial condition.

For the remaining expansions we do not need to be as precise as with this last one as there will not be cancellations due to signs. Thus we will only care about the different symbol combinations that arise.

The object $\left(I - h \left(\leftidx{^A}{\phantom{.}}{_{\hat{A}}}\right) P_{A} D_y F\right)^{-1}$ is the most involved of all of them as it is a matrix term that couples the $\Delta Q$ and $\Delta P$ equations. The matrix multiplied by $h$ is:
\begin{equation*}
\left(\leftidx{^A}{\phantom{.}}{_{\hat{A}}}\right) P_{A} D_y F = {\left[
\begin{smallmatrix}
A D_1 f & A D_2 f\\
-\hat{A} \left[D_1 f X_1 A + D_1 g (X_2 A - \mathds{1})\right] & -\hat{A} \left[D_2 f X_1 A + D_2 g (X_2 A - \mathds{1})\right]
\end{smallmatrix}
\right]}
\end{equation*}

If we write $I - h \left(\leftidx{^A}{\phantom{.}}{_{\hat{A}}}\right) P_{A} D_y F$ as:
\begin{equation*}
\left[
\begin{array}{cc}
1 - K &   - L\\
  - M & 1 - N
\end{array}
\right]
\end{equation*}
where the matrices $K, L, M, N$ are $\mathcal{O}(h)$, then its inverse must be:
\begin{equation*}
\left[
\begin{array}{cc}
W &   X\\
Y &   Z
\end{array}
\right]
\end{equation*}
with:
\begin{align*}
W &= (1 - K - L (1 - N)^{-1} M)^{-1},\\
X &= (1 - K)^{-1} L (1 - N - M (1 - K)^{-1} L)^{-1},\\
Y &= (1 - N)^{-1} M (1 - K - L (1 - N)^{-1} M)^{-1},\\
Z &= (1 - N - M (1 - K)^{-1} L)^{-1}.
\end{align*}

The only terms we are interested in are $X$ and $Z$, as those are the only ones that connect with $\Delta \Lambda_1$. The Taylor expansion of any of these terms is a daunting task given the amount of nested expansions of non-commutative terms involved. Instead we deem it sufficient to analyse the symbolic expansion found via CAS up to order 4 and draw our conclusions from there. In our case we will use the SymPy library for Python for the actual computations.

Before we begin analysing terms, it is interesting to check the form of $X$ and $Z$. We can see that $X = (1 - K)^{-1} L Z$. This means that once we know the behaviour of $Z$, that of $X$ will be easy to derive. Also from this we can easily see that all the resulting symbol combinations of $X$ must necessarily start with the coefficient matrix $A$, while for $Z$ they must start with the coefficient matrix $\hat{A}$ with the exception of the zero-th order term. In fact this is also true for $W$ and $Y$ respectively, being $W$ the one with non-zero zero-th order term.

The expansion of $Z$ (and $Y$) shows the following symbol combinations up to order 3:
\begin{footnotesize}
\begin{center}
    \begin{tabular}{ | c | c | c | c |}
    \hline
    Order &                Term &              1 Substitution &       2 Substitutions  \\ \hline
        1 &           $\hat{A}$ &                             & $\vphantom{\sum_{j}^{N^1}}$ \\ \hline
        2 &         $\hat{A} A$ &                             & $\vphantom{\sum_{j}^{N^1}}$ \\ \cline{2-4}
          &         $\hat{A} C$ &         $\hat{A} A^{-} C A$ & $\vphantom{\sum_{j}^{N^1}}$ \\ \cline{2-4}
          &         $\hat{A}^2$ &                             & $\vphantom{\sum_{j}^{N^1}}$ \\ \hline
        3 &       $\hat{A} A^2$ &                             & $\vphantom{\sum_{j}^{N^1}}$ \\ \cline{2-4}
          &       $\hat{A} A C$ &                             & $\vphantom{\sum_{j}^{N^1}}$ \\ \cline{2-4}
          & $\hat{A} A \hat{A}$ &                             & $\vphantom{\sum_{j}^{N^1}}$ \\ \cline{2-4}
          &       $\hat{A} C A$ &       $\hat{A} A^{-} C A^2$ & $\vphantom{\sum_{j}^{N^1}}$ \\ \cline{2-4}
          &       $\hat{A} C^2$ &       $\hat{A} A^{-} C A C$ & $\vphantom{\sum_{j}^{N^1}} \hat{A} A^{-} C^2 A$ \\ \cline{3-4}
          &                     &       $\hat{A} C A^{-} C A$ & $\vphantom{\sum_{j}^{N^1}}$ \\ \cline{2-4}
          & $\hat{A} C \hat{A}$ & $\hat{A} A^{-} C A \hat{A}$ & $\vphantom{\sum_{j}^{N^1}}$ \\ \cline{2-4}
          &       $\hat{A}^2 A$ &                             & $\vphantom{\sum_{j}^{N^1}}$ \\ \cline{2-4}
          &       $\hat{A}^2 C$ &       $\hat{A}^2 A^{-} C A$ & $\vphantom{\sum_{j}^{N^1}}$ \\ \cline{2-4}
          &         $\hat{A}^3$ &                             & $\vphantom{\sum_{j}^{N^1}}$ \\
    \hline
    \end{tabular}
\end{center}
\end{footnotesize}

As for the expansion of $X$ (and $W$), we get:
\begin{footnotesize}
\begin{center}
    \begin{tabular}{ | c | c | c | c |}
    \hline
    Order &          Term &        1 Substitution & 2 Substitutions  \\ \hline
        1 &           $A$ &                       & $\vphantom{\sum_{j}^{N^1}}$ \\ \hline
        2 &         $A^2$ &                       & $\vphantom{\sum_{j}^{N^1}}$ \\ \cline{2-4}
          &         $A C$ &                       & $\vphantom{\sum_{j}^{N^1}}$ \\ \cline{2-4}
          &   $A \hat{A}$ &                       & $\vphantom{\sum_{j}^{N^1}}$ \\ \hline
        3 &         $A^3$ &                       & $\vphantom{\sum_{j}^{N^1}}$ \\ \cline{2-4}
          &       $A^2 C$ &                       & $\vphantom{\sum_{j}^{N^1}}$ \\ \cline{2-4}
          & $A^2 \hat{A}$ &                       & $\vphantom{\sum_{j}^{N^1}}$ \\ \cline{2-4}
          &       $A C A$ &                       & $\vphantom{\sum_{j}^{N^1}}$ \\ \cline{2-4}
          &       $A C^2$ &                       & $\vphantom{\sum_{j}^{N^1}}$ \\ \cline{2-4}
          & $A C \hat{A}$ &                       & $\vphantom{\sum_{j}^{N^1}}$ \\ \cline{2-4}
          & $A \hat{A} A$ &                       & $\vphantom{\sum_{j}^{N^1}}$ \\ \cline{2-4}
          & $A \hat{A} C$ & $A \hat{A} A^{-} C A$ & $\vphantom{\sum_{j}^{N^1}}$ \\ \cline{2-4}
          & $A \hat{A}^2$ &                       & $\vphantom{\sum_{j}^{N^1}}$ \\
    \hline
    \end{tabular}
\end{center}
\end{footnotesize}

Focusing on $Z$, we can see that for order 2 we append either an $A$, $C$ or $\hat{A}$ to the right of the order 1 terms. Also note that $C$ can be substituted by the combination $A^{-} C A$ once, so long as the preceding symbol in the term without substitutions is not an $A$. We find the same relation between order 3 and order 2, and (although not shown here) for order 4 and order 3. Thus the pattern of construction of terms to arbitrary order seems clear for $Z$.

Focusing now on $X$, and taking into account the discussion at the beginning of the section, we can see that all the terms in $Z$ will show up multiplied by $(1 - K)^{-1} L$. The symbols this factor adds at order $n$ are $A \times [ (n-1)\text{-element variations  of} \left\lbrace A,C\right\rbrace]$. In practice, what we observe with the symbolic expansion is that every single term without substitution at order $n$ in $Z$ appears in $X$ with the first $\hat{A}$ exchanged by $A$. As for substituted terms, at order $n$ we find all substitution terms from $Z$ up to order $n-1$ with a corresponding pre-factor $\left\lbrace A,C\right\rbrace$. For instance, if we take $\hat{A} A^{-} C A$, which is of order 2 for $Z$, we will find it as $A \hat{A} A^{-} C A$ at order 3, and as $A C\hat{A} A^{-} C A$ and $A^2 \hat{A} A^{-} C A$ at order 4 and so on. An easier way to put this is that the construction of terms for $X$ is the same as for $Z$ with the restriction that substitutions $C \mapsto A^{-} C A$ can only appear after the first $\hat{A}$ that show up.

Let us finally expand the term $\Pi_{A} \left(\leftidx{^A}{\phantom{.}}{_{A}}\right) D_y F$. For the projector $\Pi_A$, (see eq.\eqref{eq:proof_RK_total_PiA}), its $\tilde{\Pi}_{i,A}$ are very similar to the terms $\tilde{X}_{i}$ that we have already studied:
\begin{equation*}
\tilde{\Pi}_{i,A} = \sum_{\left\vert \gamma\right\vert = 0}^\omega h^{\left\vert \gamma\right\vert} \left[ \prod_{j = 1}^{\dim \gamma - 1}\tilde{C}^{\gamma_{(j)}} \tilde{A} \tilde{C}^{\gamma_{(j+1)}} \tilde{A}^{-1} \right] \otimes \tilde{\Pi}_{i, A, \gamma} + \mathcal{O}(h^{\omega + 1})
\end{equation*}

It is important to note that as we have the product $\Pi_{A} \left(\leftidx{^A}{\phantom{.}}{_{A}}\right)$, we will always have one $A^{-}$ less than the number of $A$s, which prevents $A C^k A^{-}$ terms from appearing at the very end of a symbol combination.

For the Jacobian $D_y F$ we have:
\begin{align*}
D_y F = \left[\begin{smallmatrix}
D_1 f_1 &             0 & D_2 f_1 &             0\\
      0 & D_1 \tilde{f} &       0 & D_2 \tilde{f}\\
D_1 g_1 &             0 & D_2 g_1 &             0\\
      0 & D_1 \tilde{g} &       0 & D_2 \tilde{g}
\end{smallmatrix}
\right] = \left[\begin{smallmatrix}
D_1 f & D_2 f\\
D_1 g & D_2 g\\
\end{smallmatrix}
\right]
\end{align*}
The expansion of each term follows the same pattern. For instance, for $D_2 g$ we have:
\begin{equation*}
D_2 g = \sum_{i = 0}^\omega h^i C^i \otimes D_2 g_i + \mathcal{O}(h^{\omega + 1})\nonumber
\end{equation*}

Considering all this, the product $\Pi_{A} \left(\leftidx{^A}{\phantom{.}}{_{A}}\right) D_y F$ has two differentiated symbol groupings: top row (corresponding to $\Delta Q$) and bottom row (corresponding to $\Delta p$) groups.

Top row groups are the easiest ones as they are the ones that remain unaffected by $\Pi_{A}$. These are of the form:
\begin{equation*}
\sum_{i = 0}^\omega h^{i} \left[ A C^{i} \otimes U_{i} \right] + \mathcal{O}(h^{\omega + 1})
\end{equation*}
where $U_{i}$ are linear combinations of derivatives of $g$ and $f$ evaluated at the initial condition. Bottom row groups show more variety. These are of the form:
\begin{equation*}
\sum_{\left\vert\alpha\right\vert + \left\vert\beta\right\vert = 0}^\omega h^{\left\vert\alpha\right\vert + \left\vert\beta\right\vert} \left[ C^{\alpha} A \prod_{i = 1}^{\dim \beta - 1} \left( C^{\beta_{(i)}} A^{-} C^{\beta_{(i+1)}} A\right) \otimes V_{\alpha, \beta} \right] + \mathcal{O}(h^{\omega + 1})
\end{equation*}
where $V_{\alpha, \beta}$ are terms involving $\vartriangle$ and derivatives of $f$, $g$ and $\phi$ evaluated at the initial condition. The main difference here is that bottom row terms can have $C$s to the left of the first $A$, as well as the possibility of having $C \mapsto A^{-} C A$ substitutions to its right.

Putting everything together, and keeping in mind that $\omega = \min(\lambda, \mu, \nu)$ the expansion can be brought to the form:
\begin{align*}
\Delta Q &= h^2 \sum_{i = 0}^{m-1} h^{i} \left(\sum_{\alpha} K_{Q, \alpha_i} \otimes L_{Q, \alpha_i}\right) \Delta \Lambda_1 + \mathcal{O}(h^{m+2} \left\Vert\Delta \Lambda_1\right\Vert)\\
\Delta p &= h^2 \sum_{i = 0}^{m-1} h^{i} \left(\sum_{\alpha} K_{p, \alpha_i} \otimes L_{p, \alpha_i}\right) \Delta \Lambda_1 + \mathcal{O}(h^{m+2} \left\Vert\Delta \Lambda_1\right\Vert)
\end{align*}
where each $L_{j,\alpha_i}$ is again a combination of products of the derivatives of $f$, $g$, $\phi$ with $\vartriangle$ evaluated at the initial condition, and $K_{j, \alpha_i}$ is a Runge-Kutta symbol combination of order $\vert\alpha_i\vert$ as in theorem \ref{thm:vanishing_symbol_combinations}. The difference between $K_{Q, \alpha_i}$ and $K_{p, \alpha_i}$ lies on the fact that $K_{Q, \alpha_i}$ cannot begin with $C^i$ and there cannot be $C \mapsto A^{-}CA$ substitutions between the initial $A$ and the first $\hat{A}$, while on $K_{p, \alpha_i}$ it happens. Applying the result of said theorem all these terms vanish, which is what we set to prove.
\end{proof}

\begin{proposition}\label{prop:R_string_elimination}
In the Taylor expansion of $P_{A} D_z F_1$ only the terms belonging to the classes with elementary $R$-strings with a trailing zero, i.e. $\gamma$ $R$-strings of $\dim \gamma = s$ such that $\gamma_{(s)} = 0$, survive.
\end{proposition}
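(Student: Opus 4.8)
The plan is to exploit the $R_\gamma\otimes S_\gamma$ expansion of $\tilde{X}_2$ already obtained in the proof of Theorem \ref{thm:RK_total_error} and to show that right-multiplication by $D_z F_1$ — which contributes a trailing factor $D_3\tilde{g}_0$, i.e. appends a single zero $g$-index to each irreducible $R$-string $\gamma$ — produces a sum in which almost everything cancels in pairs. Concretely, I would write the relevant block of $P_A D_z F_1$ as $\sum_{\gamma}(-1)^{\dim\gamma/2-1}h^{|\gamma|}R_\gamma\otimes(S_\gamma D_3\tilde{g}_0)$, the sum ranging over irreducible $R$-strings, and construct a sign-reversing involution on the index set under which paired terms give identical tensors.

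The involution $\iota$ I would use is \emph{append/strip a trailing} $(0,0)$: if $\gamma_{(\dim\gamma)}\neq 0$ set $\iota(\gamma)=(\gamma_{(1)},\dots,\gamma_{(\dim\gamma)},0,0)$, and if $\gamma$ ends in two zeros set $\iota(\gamma)$ to be $\gamma$ with those two entries removed. Two algebraic facts of the formalism make this work. First, right appending preserves the coefficient matrix, $R_{(\gamma,0,0)}=R_\gamma$, because the two new blocks contribute at the tail a factor $\tilde{A}\tilde{C}^{0}\tilde{A}^{-1}\tilde{C}^{0}=\tilde{A}\tilde{A}^{-1}=I$. Second, since $S_{(\gamma,0,0)}=S_\gamma\,D_3\tilde{g}_0\vartriangle D_2\tilde{\phi}_0$, appending the trailing $D_3\tilde{g}_0$ collapses the tail through $\vartriangle\triangledown=I$, giving $S_{(\gamma,0,0)}D_3\tilde{g}_0=S_\gamma D_3\tilde{g}_0$. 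Hence $\gamma$ and $\iota(\gamma)$ yield the same tensor while their signs differ by $(-1)^{\pm1}$, since the dimension changes by two, so their contributions cancel. I must check $\iota$ is well defined on \emph{irreducible} strings: for $\gamma$ trailing-nonzero, $(\gamma,0,0)$ is irreducible because the only new even-position pair is $(\gamma_{(\dim\gamma)},0)$; and if $\gamma$ ends in $(0,0)$ then irreducibility forces $\gamma_{(\dim\gamma-2)}\neq 0$, so stripping returns a trailing-nonzero string. Thus $\iota$ is a genuine fixed-point-free involution on the non-fixed indices, and all those terms drop out.

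It then remains to identify the fixed points — the strings with exactly one trailing zero, $\gamma_{(\dim\gamma)}=0\neq\gamma_{(\dim\gamma-1)}$ — with the classes whose elementary representative ends in zero. For the forward inclusion, if the shortest representative $\delta^{\ast}$ of a class ends in zero then $\delta^{\ast}_{(\dim\delta^{\ast}-1)}\neq 0$, for otherwise $\iota$ would strip $\delta^{\ast}$ to a shorter class member, contradicting minimality; hence $\delta^{\ast}$ is itself a surviving fixed point. For the reverse inclusion I would argue by induction on dimension along the generating operations (left/right appending, insertion, splitting) that every member of a class with a trailing-nonzero elementary representative is again either trailing-nonzero or ends in $(0,0)$: a case check on how each operation acts on the last two entries, using the nonvanishing requirements built into each operation, shows the property is preserved, so such a class has no fixed point and cancels entirely.

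The main obstacle I anticipate is precisely this last bookkeeping: verifying that $\iota$ respects irreducibility in every boundary case — the degenerate reducible strings such as $(\gamma',0,0,0)$ must be excluded consistently, as already flagged for the splitting operations in the text — and that the induction over the operations is exhaustive, so that trailing-nonzero classes cancel down to the last term while trailing-zero classes retain at least their elementary representative. Once this is established, the surviving terms of $P_A D_z F_1$ are exactly those whose class has an elementary $R$-string ending in zero, which is the claim.
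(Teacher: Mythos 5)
Your argument is correct and is essentially the paper's own proof: both cancel a trailing-nonzero string $\gamma$ against its right-appended partner $(\gamma,0,0)$ using $R_{(\gamma,0,0)}=R_{\gamma}$, the sign flip coming from $\dim\gamma\mapsto\dim\gamma+2$, and $S_{(\gamma,0,0)}D_3\tilde{g}_0=S_{\gamma}D_3\tilde{g}_0\vartriangle\triangledown=S_{\gamma}D_3\tilde{g}_0$. Your sign-reversing-involution packaging simply formalizes the paper's closing remark that every member of a trailing-nonzero class still admits right appending; the only point to add is to declare the order-zero string $(0,0)$ a fixed point so that stripping is never applied to a dimension-two string.
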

\begin{proof}
Given a class with an elementary representative $\gamma$ such that $\gamma_{(s)} \neq 0$ implies that it admits right appending, which gives us $\gamma'$. $R_{\gamma} = R_{\gamma'}$ by definition of class. On the other hand $S_{\gamma} \neq S_{\gamma'}$ and $\dim \gamma' = \dim \gamma + 2$, which means both terms will have opposite signs. Now $S_{\gamma'} = S_{\gamma} D_3 \tilde{g}_0 \vartriangle D_2 \tilde{\phi}_0$, but $S_{\gamma'} D_3 \tilde{g}_0 = S_{\gamma} D_3 \tilde{g}_0 \vartriangle \triangledown = S_{\gamma} D_3 \tilde{g}_0$, which is exactly what we needed to show that they cancel each other out. This is also true for other elements derived from the same elementary $R$-string via splitting and insertion, as they still necessarily admit right appending.
\end{proof}

\begin{theorem}
\label{thm:vanishing_symbol_combinations}
Assume an $s$-stage symplectic partitioned Runge-Kutta me\-thod with coefficients $A$ satisfying hypotheses \ref{itm:H1}, \ref{itm:H2}, \ref{itm:H3} (and consequently $\hat{A}$ satisfying \ref{itm:H1'} and \ref{itm:H2'}), together with conditions $D(r)$, $\hat{D}(q)$, $D\hat{D}(p - r)$ and $\hat{D}D(p - q)$. With $\alpha \geq 0$, we have:
\begin{equation}
e_s^T C^{\alpha} A \left(\prod_{i = 1}^{k} M_i \right) (\hat{A}_1 - \hat{A} A^{-} A_1) = 0, \quad 0 \leq k \leq \min(r, q, p - r, p - q) - 1
\label{eq:vanishing_combination_1}
\end{equation}
and:
\begin{equation}
e_s^T C^{\alpha} A \left(\prod_{i = 1}^{k} N_i \right) C A^{-} A_1, \quad 0 \leq k \leq \min(r, q, p - r, p - q) - 1
\label{eq:vanishing_combination_2}
\end{equation}
where $M_i$ and $N_i$ can be $C$, $A$, $\hat{A}$, $A^{-} C A$, $A C A^{-}$ for any $i$ except $k$ where $M_k = A C A^{-}$ cannot occur.
\end{theorem}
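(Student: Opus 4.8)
The plan is to strip the outer factor, transport the weight vector $b$ rightwards through $\prod_i M_i$ using the simplifying assumptions, and contract the resulting elementary functionals against the two \emph{seed} vectors $\hat{A}_1 - \hat{A} A^{-} A_1$ and $C A^{-} A_1$, which I will show are annihilated by every monomial functional $b^T C^m$ lying within the order budget. First I would record the structural identities forced by the hypotheses. Since \ref{itm:H3} with consistency gives $c_s = 1$, one has $e_s^T C^\alpha = e_s^T$ and $e_s^T A = b^T$, so the prefix collapses to $e_s^T C^\alpha A = b^T$ regardless of $\alpha$. From $e_s^T A = b^T$ and $A A^{-} = \mathrm{diag}(0, I_{s-1})$ one gets $b^T A^{-} = e_s^T$, equivalently $\tilde{b}^T \tilde{A}^{-1} = (0,\dots,0,1)$. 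Symplectic conjugacy supplies $b_j = \hat{b}_j$ and, at first order, $b^T A = b^T \hat{A} = b^T(I - C)$; more generally $D(r)$ and $\hat{D}(q)$ give the transfer rules $b^T C^{m-1} A = \tfrac{1}{m} b^T(I - C^m)$ for $m \le r$ and $b^T C^{m-1}\hat{A} = \tfrac{1}{m} b^T(I - C^m)$ for $m \le q$, with the mixed products of $A$ and $\hat{A}$ controlled by $D\hat{D}(p - r)$ and $\hat{D}D(p - q)$.

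The core lemma I would isolate is the seed annihilation. For $C A^{-} A_1$ the claim is $b^T C^{m} (C A^{-} A_1) = b^T C^{m+1} A^{-} A_1 = 0$ whenever $m + 1 \le r$; right-multiplying $D(r)$ by $\tilde{A}^{-1}$ yields the recursion $\tilde{b}^T \tilde{C}^k \tilde{A}^{-1} = \tilde{b}^T \tilde{A}^{-1} - k\, \tilde{b}^T \tilde{C}^{k-1}$, and feeding in $\tilde{b}^T \tilde{A}^{-1} = (0,\dots,0,1)$ together with the first-column identity $\tilde{b}^T \tilde{C}^m \tilde{A}_1 = b_1/(m+1)$ makes the expression telescope to zero. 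For $\hat{A}_1 - \hat{A} A^{-} A_1$ I would use $\hat{A}_1 = \hat{b}_1 \mathds{1}$ (from \ref{itm:H2'}) and $B(p)$ to get $b^T C^m \hat{A}_1 = \hat{b}_1/(m+1)$, and $\hat{D}(q)$ together with $b^T A^{-} A_1 = b_1$ and the previous vanishing to get $b^T C^m \hat{A} A^{-} A_1 = b_1/(m+1)$; the difference is $(\hat{b}_1 - b_1)/(m+1) = 0$. This simultaneously reproduces the order-$0$ and order-$1$ cancellations already displayed in the proof of Theorem \ref{thm:RK_total_error}.

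It then remains to reduce the full functional $b^T \prod_{i=1}^k M_i$ to a linear combination of monomials $b^T C^m$ with exponents kept inside $\min(r, q, p - r, p - q)$, after which the seed annihilation finishes the argument term by term. I would do this by induction on $k$, peeling off the leftmost factor: each of $A$, $\hat{A}$ and $C$ sends $b^T C^m$ to a two-term monomial combination by the transfer rules, and the composite $A^{-} C A$ also collapses cleanly, since $b^T C^m A^{-} C A = \tfrac{1}{m+1} b^T + \tfrac{m}{m+1} b^T C^{m+1}$ --- the boundary vector $e_s^T$ spat out by $A^{-}$ folds back through $e_s^T C = e_s^T$ and $e_s^T A = b^T$, and the stray $e_1^T$ that appears at degree one is killed by the trailing $C$ because $c_1 = 0$. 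The bound $k \le \min(r, q, p - r, p - q) - 1$ is exactly what keeps every invoked instance of $D$, $\hat{D}$, $D\hat{D}$, $\hat{D}D$ and $B$ within its range of validity.

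The genuinely delicate point, and the reason for the exclusion $M_k \ne A C A^{-}$, is the operator $A C A^{-}$: its degree-one piece produces a surviving boundary term $b_1 e_1^T$ that does not fold back into the monomial algebra, since here $A^{-}$ is not shielded by a following $C$. Against $C A^{-} A_1$ this is harmless because $e_1^T C A^{-} A_1 = 0$, but against $\hat{A}_1 - \hat{A} A^{-} A_1$ the contraction $e_1^T(\hat{A}_1 - \hat{A} A^{-} A_1)$ need not vanish, which forbids $A C A^{-}$ in the final slot. For interior occurrences the $e_1^T$ boundary is annihilated by a subsequent $A$ or $C$ (first rows vanish by \ref{itm:H1} and $c_1 = 0$) or by a subsequent $A^{-} C A$, but it propagates across $\hat{A}$; taming this propagation is the main obstacle, and I expect it to be absorbed precisely by the $R$-string bookkeeping of Proposition \ref{prop:R_string_elimination}, where the act of appending a trailing zero pairs each surviving boundary contribution with an opposite-sign companion in the same class. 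Collecting these cancellations over all admissible choices of the $M_i$ and $N_i$ then yields both \eqref{eq:vanishing_combination_1} and \eqref{eq:vanishing_combination_2}.
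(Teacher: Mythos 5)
Your proposal follows essentially the same route as the paper: your two ``seed annihilation'' identities are exactly the paper's reduced Combinations 1 and 2 (proved there from $D(r)$, $\hat{D}(q)$, $e_s^T A = b$ and $b A^{-} = e_s^T$), and the paper likewise finishes by asserting that recursive application of $D$, $\hat{D}$, $D\hat{D}$, $\hat{D}D$ and $b C^k A^{-} = e_s^T - k\, b C^{k-1}$ reduces every admissible symbol string to a linear combination of those two seeds. If anything you are more explicit than the paper about the leftmost-factor peeling and about the stray $e_1^T$ boundary term produced by $A C A^{-}$ (which is the paper's unexplained reason for excluding $M_k = A C A^{-}$), so the approaches match.
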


\begin{proof}
Multiplying $D(r)$ by $A^{-}$ we may obtain that:
\begin{align}
b C^k A^{-} &= e_s^T - k b C^{k-1}, \quad 1 \leq k \leq r
\label{eq:inv_D_assumption}
\end{align}

As $A$ satisfies \ref{itm:H3}, we also have that $e_s^T A = b$, and consequently $b A^{-} = e_s^T$.

The vanishing of the different symbol terms rests in both the vanishing of the following reduced combinations and the fact that any symbol combination that appears in the expansion can be brought to one of these.
\begin{itemize}
\item Combination 1:
\begin{equation*}
b C^{k-1} (\hat{A}_1 - \hat{A} A^{-} A_1) = 0, \quad 1 \leq k \leq \min(r,\hat{r})
\end{equation*}
This is said to be of order $k-1$, as that is the number of times $C$ appears. It vanishes because:
\begin{align*}
b C^{k-1} \hat{A}_1 &= k^{-1} b_1\\
b C^{k-1} \hat{A} A^{-} A_1 &= k^{-1} b (1 - C^{k}) A^{-} A_1\\
&= k^{-1} b A^{-} A_1  - k^{-1} b C^{k} A^{-} A_1\\
&= k^{-1} b_1  - k^{-1} \left(b_1 - k b C^{k-1} A_1\right)\\
&= k^{-1} b_1
\end{align*}
The application of the simplifying assumption $\hat{D}(\hat{r})$ in the second line and $D(r)$ in the fourth line are the limiting factors.

\item Combination 2:
\begin{equation*}
b C^{k} A^{-} A_1 = 0, \quad 1 \leq k \leq r
\end{equation*}
This is said to be of order $k$, as that is the number of times $C$ appears.
\begin{align*}
b C^{k} A^{-} A_1 &= b_1 - k b C^{k-1} A_1\\
&= b_1 - b_1\\
&= 0
\end{align*}
Again the application of the simplifying assumption $D(r)$ in the first line is the limiting factor.
\end{itemize}

Combination 1 and combination 2 can be generalized to the form \eqref{eq:vanishing_combination_1} and \eqref{eq:vanishing_combination_2} respectively.

As $c_s = 1$, we have that $e_s^T C^\alpha = e_s^T$, thus the $C^{\alpha}$ is there only for generality. After recursive application of $D$, $\hat{D}$, $D\hat{D}$, $\hat{D}D$ and \eqref{eq:inv_D_assumption} shows that each of these expressions can be brought to a linear combination of one of the reduced combinations with different values of $k$, which proves the theorem.
\end{proof}

\begin{remark}
For an $s$-stage Lobatto III A-B method we have that $s - 2 = r = p - q = q - 2 = p - r - 2$, thus:
\begin{equation}
e_s^T C^{\alpha} A \left(\prod_{i = 1}^{k} M_i \right) (\hat{A}_1 - \hat{A} A^{-} A_1) = 0, \quad 0 \leq k \leq s - 3
\label{eq:vanishing_combination_1_Lobatto}
\end{equation}
\begin{equation}
e_s^T C^{\alpha} A \left(\prod_{i = 1}^{k} N_i \right) C A^{-} A_1, \quad 0 \leq k \leq s - 3
\label{eq:vanishing_combination_2_Lobatto}
\end{equation}
\end{remark}

\begin{theorem}
\label{thm:local_error}
Assume an $s$-stage symplectic partitioned Runge-Kutta method with coefficients $A$ satisfying hypotheses \ref{itm:H1}, \ref{itm:H2}, \ref{itm:H3} (and consequently $\hat{A}$ satisfying \ref{itm:H1'} and \ref{itm:H2'}), together with conditions $B(p)$, $C(q)$, $D(r)$ (and consequently $\hat{B}(p)$, $\hat{C}(r)$, $\hat{D}(q)$). Then we have:
\begin{subequations}
	\label{eq:local_error_components}
	\begin{alignat}{1}
		\delta q_h (x) &= \mathcal{O}(h^{\min(p, q + r + 1) + 1}),\\
		\delta p_h (x) &= \mathcal{O}(h^{\min(p, 2 q, q + r) + 1}),\\
		\delta z_h (x) &= \mathcal{O}(h^{q}).
	\end{alignat}
\end{subequations}
\end{theorem}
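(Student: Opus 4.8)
The plan is to realise the local error as the special instance of the perturbation analysis in which the \emph{perturbed} trajectory is the exact solution itself. Write the local errors as $\delta q_h = q_1 - q(x_0+h)$, $\delta p_h = p_1 - p(x_0+h)$, $\delta z_h = \lambda_1 - \lambda(x_0+h)$, where $(q(x),p(x),\lambda(x))$ solves \eqref{eq:partitioned_system} with consistent initial data $(q_0,p_0,\lambda_0)$, i.e.\ $\phi(q_0,p_0)=0$ and $(D_1\phi\cdot f)(q_0,p_0)+(D_2\phi\cdot g)(q_0,p_0,\lambda_0)=0$. Since $A$ is stiffly accurate (\ref{itm:H3}) and the pair is symplectic conjugated ($b=\hat b$), the external outputs coincide with the last internal stages, $q_1=Q_s$, $p_1=p_s$ and $\lambda_1=\Lambda_s$ (recall $c_s=1$). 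Hence, setting $\hat Q_i=q(x_0+c_ih)$, $\hat p_i=\hat P_i=p(x_0+c_ih)$, $\hat\Lambda_i=\lambda(x_0+c_ih)$ and $\hat q_0=q_0$, $\hat p_0=p_0$, $\hat\lambda_0=\lambda_0$ in Theorem~\ref{thm:impact_of_perturbations}, the local error is exactly $-\Delta X_s$ and we enter the regime $\Delta q_0=\Delta p_0=\Delta\lambda_0=0$.

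First I would compute the residuals produced by inserting the exact solution, exactly as in the proof of Lemma~\ref{lem:RK_delta_error}. The constraint residual vanishes identically, $\theta_i=\phi\big(q(x_0+c_ih),p(x_0+c_ih)\big)=0$, because the exact trajectory lies on the manifold; a Taylor expansion about $x_0$ then gives $\delta_{Q,i},\delta_{p,i}=\mathcal{O}(h^{q})$ by $C(q)$ and $\delta_{P,i}=\mathcal{O}(h^{r})$ by $\hat C(r)$ (equivalently $\hat q=r$).

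With these inputs the algebraic component is immediate: substituting $\Delta q_0=\Delta p_0=\Delta\lambda_0=0$ and $\theta=0$ into the $\Delta\Lambda_s$ estimate of Theorem~\ref{thm:RK_total_error} leaves $\delta z_h=\mathcal{O}(\Vert\delta_Q\Vert+\Vert\delta_p\Vert)=\mathcal{O}(h^{q})$, which is the third claim. The differential components are the delicate part, since the \emph{generic} defect bounds of Theorem~\ref{thm:RK_total_error} only yield $\mathcal{O}(h^{q+1})$, far short of the asserted order. To sharpen them I would return to the external quadratures $q_1=q_0+h\sum_j b_j f(Q_j,P_j)$ and $p_1=p_0+h\sum_j b_j g(Q_j,P_j,\Lambda_j)$ and split each into two pieces: (i) the quadrature of the exact derivative evaluated at the nodes, whose deviation from the exact increment is $\mathcal{O}(h^{p+1})$ by $B(p)$; and (ii) the $b_j$-weighted, Jacobian-linearised propagation of the internal stage errors $\Delta Q_j,\Delta P_j,\Delta\Lambda_j$.

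The main obstacle is the estimation of piece (ii). Using $e_s^TA=b$ one rewrites the weighted propagated defects as Runge-Kutta symbol combinations of the type handled in Theorem~\ref{thm:vanishing_symbol_combinations}; every contribution below the claimed order is matched to one of the vanishing combinations and cancels, the surviving terms being controlled by the limiting simplifying assumptions $D(r)$, $\hat D(q)$, $D\hat D(p-r)$ and $\hat D D(p-q)$. This promotes the naive $\mathcal{O}(h^{q+1})$ up to $\mathcal{O}(h^{q+r+2})$ in the $q$-component, and up to $\mathcal{O}(h^{\min(2q,\,q+r)+1})$ in the $p$-component, where the extra $2q$ branch originates from the $\Delta\Lambda_j$ feedback entering through $D_3 g$ (itself $\mathcal{O}(h^{q})$, and weighted once more by the quadrature). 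Combining the quadrature order $h^{p+1}$ of (i) with the propagation orders of (ii) yields $\delta q_h=\mathcal{O}(h^{\min(p,\,q+r+1)+1})$ and $\delta p_h=\mathcal{O}(h^{\min(p,\,2q,\,q+r)+1})$. The bulk of the work, and the reason the $R$-string apparatus was developed, lies in this symbol bookkeeping: one must verify that \emph{all} low-order terms admit a vanishing partner and identify precisely which simplifying assumption caps each surviving branch.
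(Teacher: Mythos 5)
Your route differs from the paper's: the paper proves this theorem by the classical rooted-tree / order-condition analysis of Hairer--Lubich--Roche and Jay, extended to bi-colored trees $t_Q$, $t_P$ and $u$, and reads off the orders $q+r+2$, $q+r$, $2q$, $q+r+1$ from which tree compositions ($[t_Q,t_P]_Q$, $[t_P,u]_P$, \dots) are admissible and which simplifying assumption ($C(q)$, $\hat C(r)$, $D(r)$) reduces each one. Your setup --- inserting the exact solution as the perturbed trajectory, getting $\theta=0$, $\delta_Q,\delta_p=\mathcal{O}(h^{q})$, $\delta_P=\mathcal{O}(h^{r})$ as in Lemma~\ref{lem:RK_delta_error}, and reading $\delta z_h=\mathcal{O}(h^{q})$ directly off Theorem~\ref{thm:RK_total_error} --- is sound, and the $\lambda$-component of your argument is complete.

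The gap is in the differential components. You correctly observe that the generic bounds only give $\mathcal{O}(h^{q+1})$ and that the superconvergence must come from cancellation in the $b$-weighted propagation of the stage defects; but the justification you offer for that cancellation is to invoke Theorem~\ref{thm:vanishing_symbol_combinations}, and that theorem does not cover the combinations that actually arise here. Its identities are all of the form $e_s^T C^{\alpha}A(\prod M_i)(\hat A_1-\hat A A^{-}A_1)$ or $e_s^T C^{\alpha}A(\prod N_i)CA^{-}A_1$: every one terminates in the first-column vectors $A_1$, $\hat A_1$, because they were built to kill the $\Delta\Lambda_1$ feedback in Theorem~\ref{thm:RK_total_error}. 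The propagated quadrature defects are distributed over \emph{all} stages, so the quantities you need to bound are of the form $b\,(\cdots)\,\delta$ with $\delta_i\propto\bigl(\tfrac{c_i^{q+1}}{q+1}-\sum_j a_{ij}c_j^{q}\bigr)$, and showing that these gain the extra $r$ (resp.\ $q$) orders requires transferring the weights via $D(r)$, $\hat D(q)$ across arbitrary elementary differentials --- which is exactly the bookkeeping the bi-colored tree analysis performs and which your sketch asserts rather than proves. In particular the claims ``every contribution below the claimed order is matched to one of the vanishing combinations'' and the attribution of the $2q$ branch to the $\Delta\Lambda_j$ feedback through $D_3g$ are unsubstantiated as written (the paper derives the $p$-component limits from the trees $[t_P,u]_P$ and from $D(r)$, not from a $D_3g$ feedback count). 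To close the argument you would either have to extend Theorem~\ref{thm:vanishing_symbol_combinations} to stage-distributed right factors, or simply carry out the tree expansion --- at which point you have reproduced the paper's proof.
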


\begin{proof}
The proof of this theorem is similar to that of \cite{Jay93}, theorem 5.1, which follows that of \cite{HaLuRo89}, theorem 5.9, and \cite{HaWa96}, theorem 8.10. (As the author was not initially used to working with \emph{trees}, we recommend a first look of at \cite{HaNoWa93}, theorem 7.4, for the non-initiated.)
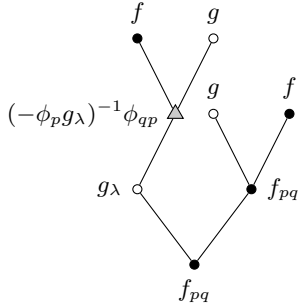
\begin{figure}[!h]
  \begin{minipage}[c]{0.35\textwidth}
	\begin{tikzpicture}[font=\footnotesize]
	  \tikzset{
	    level 1/.style={level distance=-10mm,sibling distance=15mm},
	    level 2/.style={level distance=-10mm,sibling distance=10mm},
	    level 3/.style={level distance=-10mm,sibling distance=10mm},
	    level 4/.style={level distance=-10mm,sibling distance=10mm},
	  }
	
	  \node[solid node,label=below:{$f_{p q}$}]{}
	    child{node(01)[hollow node,label=left:{$g_{\lambda}$}]{}
	      child{node(l1)[]{}edge from parent[draw=none]{}}
	      child{node(l2)[triangle node,label=left:{$(-\phi_p g_{\lambda})^{-1}\phi_{q p}$}]{}
	        child{node[solid node,label=above:{$f$}]{}edge from parent node[left]{}}
	        child{node[hollow node,label=above:{$g$}]{}edge from parent node[right]{}}
	        edge from parent node[right]{}
	      }
	      edge from parent node[right]{}
	    }
	    child{node(02)[solid node,label=right:{$f_{p q}$}]{}
	      child{node(l1)[hollow node,label=above:{$g$}]{}edge from parent node[left]{}}
	      child{node(l2)[solid node,label=above:{$f$}]{}edge from parent node[right]{}}
	      edge from parent node[left,xshift=0]{}
	    }
	  ;
	\end{tikzpicture}
  \end{minipage}\hfill
  \begin{minipage}[c]{0.65\textwidth}
    \caption{This order 6 tree represents the term $f_{p q}\left(g_{\lambda} (-\phi_p g_{\lambda})^{-1}\phi_{q p}(f,g), f_{p q}(g,f)\right)$. Note that the order is derived from the number of round nodes minus the number of triangle nodes. The tree itself can be written as $\left[[[\tau_Q,\tau_P]_{\lambda}]_P,[\tau_P,\tau_Q]_Q\right]_Q$ and corresponds to the Runge-Kutta term: $b_i \hat{a}_{i j} a^{-}_{j k} c_k^2 a_{i l} c_l^2$, where $a^{-}_{i j}$ are the components of the $A^{-}$ matrix.
    } \label{fig:sample_tree}
  \end{minipage}
\end{figure}

The arguments are essentially the same as those used in \cite{HaLuRo89} for $A$ invertible, but using a bi-colored tree extension (see fig.\ref{fig:sample_tree}). The inverses that appear need only be swapped by $A^{-}$. In these results two trees are used, $t$ and $u$ trees, referring to $y$ and $z$ equations respectively. In our case we will have both $t_Q$ and $t_P$ for $Q$ and $P$ equations, plus $u$ for $\lambda$ equations.

The key difference with respect to both this and Jay is that instead of only needing to set the limit such that for $[t,u]_y$ either $t$ or $u$ are above the maximum reduction order by $C(q)$ ($q + 1$ and $q - 1$), which leads to $2 q$, we need to be careful because we have two types of trees with $C(q)$ and $\hat{C}(r)$. First of all it is impossible to have $[t_Q, u]_Q$ as $f$ does not depend on $\lambda$, and we can only have $[t_Q, t_P]_Q$ which pushes the limit to $q + r + 2$. On the other hand, $[t_P,u]_P$ also sets a limit, which as it turns out is $q + r$. For both there is also the limit $q + r + 1$ set by $D(r)$, which trumps the limit set for $Q$ equations but it is trumped for $P$ equations by the one we just set.
\end{proof}

\begin{theorem}
\label{thm:global_error}
Consider the IVP posed by the partitioned differential-algebraic system of eqs.\eqref{eq:partitioned_system}, together with consistent initial values and the Runge-Kutta method \eqref{eq:RK_partitioned_system}. In addition to the hypotheses of theorem \ref{thm:local_error}, suppose that $\left\Vert R_A(\infty)\right\Vert \leq 1$  and $q \geq 1$ if $R_{A}(\infty)$. Then for $t_n - t_0 = n h \leq C$, where $C$ is some constant, the global error satisfies:
\begin{subequations}
	\label{eq:global_error_components}
	\begin{alignat}{1}
		q_n - q(t_n) &= \mathcal{O}(h^{\min(p, q + r + 1)})\label{eq:global_error_q}\\
		p_n - p(t_n) &= \mathcal{O}(h^{\min(p, 2 q, q + r)})\label{eq:global_error_p}\\
		z_n - z(t_n) &= \left\lbrace
			\begin{array}{rl}
				\mathcal{O}(h^{q}) & \text{if } -1 \leq R_A(\infty) < 1,\\
				\mathcal{O}(h^{q - 1}) & \text{if } R_A(\infty) = 1.
			\end{array} \right.\label{eq:global_error_lambda}
	\end{alignat}
\end{subequations}
\end{theorem}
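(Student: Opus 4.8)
The plan is to run a Lady Windermere's fan argument, writing the global error as a telescoping sum of local errors, each transported to the endpoint $t_n$ by the one-step propagation operator of the method. The two ingredients are already available: Theorem \ref{thm:local_error} supplies the local error $(\delta q_h, \delta p_h, \delta z_h)$ incurred by a single step started exactly on the solution, while Theorem \ref{thm:impact_of_perturbations}, together with its sharpened form Theorem \ref{thm:RK_total_error}, describes how a perturbation of the incoming data $(\Delta q_0, \Delta p_0, \Delta\lambda_0)$ and the step defects propagate through one step. Writing $\Delta q_n = q_n - q(t_n)$ and so on, and identifying the defects of Theorem \ref{thm:RK_total_error} with the local errors of Theorem \ref{thm:local_error}, I obtain a linear recursion of the form
\begin{equation*}
\begin{pmatrix}\Delta q\\ \Delta p\end{pmatrix}_{n+1} = \left[\begin{pmatrix} I & 0\\ \Pi_{1,0} & \Pi_{2,0}\end{pmatrix} + \mathcal{O}(h)\right]\begin{pmatrix}\Delta q\\ \Delta p\end{pmatrix}_n + \mathcal{O}(h^{m+2})\,\Delta\lambda_n + \ell_n,
\end{equation*}
\begin{equation*}
\Delta\lambda_{n+1} = \left[\mathcal{R}_A(\infty) + \mathcal{O}(h)\right]\Delta\lambda_n + \mathcal{O}(\|\Delta q_n\| + \|\Delta p_n\|) + \tfrac{1}{h}\,\ell_n^{\lambda},
\end{equation*}
where $\ell_n$ collects the $q,p$ local errors and $\ell_n^{\lambda}$ the constraint defect.

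The first step is to close the $(q,p)$ subsystem on its own, observing that the coupling from $\Delta\lambda$ enters only at order $h^{m+2}$ and is therefore harmless over the whole interval. The key algebraic fact is that the leading propagation matrix $B = \bigl(\begin{smallmatrix} I & 0\\ \Pi_{1,0} & \Pi_{2,0}\end{smallmatrix}\bigr)$ is idempotent: since $\Pi_{2,0} = I_n - D_3 g (D_2\phi D_3 g)^{-1} D_2\phi$ is a projection and a short computation gives $\Pi_{2,0}\Pi_{1,0} = 0$, one checks $B^2 = B$, so its powers are uniformly bounded; the $\mathcal{O}(h)$ perturbation, accumulated over $n = \mathcal{O}(1/h)$ steps, preserves stability of the discrete evolution. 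With a stable and bounded transport, the standard telescoping estimate converts each local error into a global error one power of $h$ lower. Feeding in the orders of Theorem \ref{thm:local_error}, namely $\delta q_h = \mathcal{O}(h^{\min(p,q+r+1)+1})$ and $\delta p_h = \mathcal{O}(h^{\min(p,2q,q+r)+1})$, this yields \eqref{eq:global_error_q} and \eqref{eq:global_error_p}.

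The $\lambda$ component is treated last, once the $q,p$ global errors are known. Its recursion is governed by the single scalar factor $\mathcal{R}_A(\infty)$, and the global $\lambda$-error is a geometric-type sum $\Delta\lambda_n = \sum_{j} \mathcal{R}_A(\infty)^{\,n-j}\bigl(\text{transported defects}\bigr)$. Here the hypothesis $\|\mathcal{R}_A(\infty)\| \leq 1$ is decisive and produces the dichotomy of \eqref{eq:global_error_lambda}: when $-1 \leq \mathcal{R}_A(\infty) < 1$ the series is damped, its sum is bounded independently of $n$, and the global error inherits the local order $\mathcal{O}(h^q)$ without the usual loss of a power; when $\mathcal{R}_A(\infty) = 1$ there is no damping, the $\mathcal{O}(1/h)$ terms accumulate, and one power is lost, giving $\mathcal{O}(h^{q-1})$. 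Since the $q,p$ errors entering this recursion are themselves $\mathcal{O}(h^q)$ or better, they do not degrade these estimates.

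The main obstacle, and the genuinely new part relative to \cite{Jay93}, is verifying that the partitioned structure --- the two distinct coefficient matrices $A$ and $\hat{A}$ --- spoils neither the stability of the transport nor the superconvergence orders. Concretely, one must confirm that it is the invertible $A$, and not the singular $\hat{A}$, that controls the $\lambda$-propagation, so that it is $\mathcal{R}_A(\infty)$ that appears, and that the projector identities underlying $B^2 = B$ survive with the mixed $A$/$\hat{A}$ blocks. The other delicate point is the boundary case $\mathcal{R}_A(\infty) = 1$, where the loss of one power in $z$ must be tracked carefully through the coupling of the $\lambda$-recursion to the $q,p$ errors; the hypothesis relating $q$ and $R_A(\infty)$ is precisely what keeps the estimate meaningful there.
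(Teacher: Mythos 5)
Your route is essentially the paper's: the recursion for $(\Delta q_n,\Delta p_n,\Delta\lambda_n)$ obtained by applying Theorem \ref{thm:RK_total_error} to neighbouring Runge--Kutta solutions with $\delta_i=0$, $\theta=0$, stability of the transport via the projector identities $(\Pi_{2,k})^2=\Pi_{2,k}$ and $\Pi_{2,k}\Pi_{1,k}=0$ (your idempotency of $B$ is exactly this, packaged as a single matrix), the harmlessness of the $h^{m+2}\Vert\Delta\lambda_n\Vert$ coupling, and a Lady Windermere's fan summation of the local errors of Theorem \ref{thm:local_error}. The $(q,p)$ estimates \eqref{eq:global_error_q}--\eqref{eq:global_error_p} are obtained the same way in the paper, and your observation about which of $A$, $\hat{A}$ governs the $\lambda$-propagation is the content already established in Theorem \ref{thm:RK_total_error}, so nothing new is needed there.

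There is one genuine gap, in the $\lambda$ component at the endpoint $\mathcal{R}_A(\infty)=-1$. You assert that for $-1\leq \mathcal{R}_A(\infty)<1$ ``the series is damped'' and the global error inherits the local order $\mathcal{O}(h^q)$. That mechanism fails at $\mathcal{R}_A(\infty)=-1$: the weights $\mathcal{R}_A(\infty)^{\,n-j}=(-1)^{\,n-j}$ have modulus one, there is no damping, and the naive bound on the sum of $n=\mathcal{O}(1/h)$ transported defects of size $\mathcal{O}(h^{q})$ gives only $\mathcal{O}(h^{q-1})$, i.e.\ the same as the $\mathcal{R}_A(\infty)=1$ case. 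The recovery of $\mathcal{O}(h^{q})$ at $\mathcal{R}_A(\infty)=-1$ relies on cancellation between consecutive alternating terms, which requires showing that the local $\lambda$-defects admit a smooth asymptotic expansion in $h$ along the solution; the paper (following \cite{Jay93}, Theorem 5.2) invokes precisely such a perturbed asymptotic expansion for this case. Without that additional argument your dichotomy \eqref{eq:global_error_lambda} is only proved for $\vert\mathcal{R}_A(\infty)\vert<1$ and $\mathcal{R}_A(\infty)=1$.
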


\begin{proof}
Following the steps of \cite{Jay93}, theorem 5.2, for $\left\Vert R(\infty)\right\Vert < 1$ and $\left\Vert R(\infty)\right\Vert = 1$, $z_n - z(t_n)$ can be found to be of order $\mathcal{O}(h^{q})$ and $\mathcal{O}(h^{q - 1})$ respectively. As stated there, the result for $R(\infty) = -1$ can actually be improved to $\mathcal{O}(h^{q})$ by considering a perturbed asymptotic expansion.

Now, we proceed as in \cite{HaWa96}, theorem VI.7.5, applying \eqref{eq:RK_total_error_Q}\eqref{eq:RK_total_error_P}\eqref{eq:RK_total_error_Lam} to two neighbouring Runge-Kutta solutions, $\left\lbrace \tilde{q}_n, \tilde{p}_n, \tilde{\lambda}_n\right\rbrace$ and $\left\lbrace \hat{q}_n, \hat{p}_n, \hat{\lambda}_n\right\rbrace$, with $\delta_i = 0$, $\theta = 0$. Using the notation $\Delta x_n = \tilde{x}_n - \hat{x}_n$, we can write:
\begin{align*}
\Delta q_{n+1} &= \Delta q_n + \mathcal{O}\left( h \left\Vert \Delta p_n \right\Vert + h^{m + 2} \left\Vert \Delta \lambda_n \right\Vert\right)\\
\Delta p_{n+1} &= \Pi_{1,n} \Delta q_n + \Pi_{2,n} \Delta p_n + \mathcal{O}\left( h^{m + 2} \left\Vert \Delta \lambda_n \right\Vert\right)\\
\Delta \lambda_{n+1} &= \mathcal{R}_A(\infty) \Delta \lambda_n + \mathcal{O}\left( \left\Vert \Delta q_n \right\Vert + \left\Vert \Delta p_n \right\Vert + h \left\Vert \Delta \lambda_n \right\Vert \right)
\end{align*}
where $\Pi_{1,n}$ and $\Pi_{2,n}$ are the projectors defined in the statement of theorem \ref{thm:RK_total_error}, evaluated at $\hat{q}_n$, $\hat{p}_n$, $\hat{\lambda}_n$, and $m = \min(q - 1, r, p - q, p - r)$ for $-1 \leq R(\infty) < 1$ or $m = \min(q - 2, r, p - q, p - r)$ for $R(\infty) = 1$.

We can follow the same philosophy of \cite{HaLuRo89}, lemma 4.5, and try to relate $\left\lbrace\Delta q_{n},\Delta p_{n},\Delta \lambda_{n}\right\rbrace$ with $\left\lbrace\Delta q_{0},\Delta p_{0},\Delta \lambda_{0}\right\rbrace$. For this we make use of the fact that $\Pi_{i,n+1} = \Pi_{i,n} + \mathcal{O}(h)$, $\left(\Pi_{2,k}\right)^2 = \Pi_{2,k}$ and $\Pi_{2,k} \Pi_{1,k} = 0$ (these latter facts can be readily derived from their definition).

This leads to:
\begin{align*}
\left\Vert \Pi_{1,n+1} \Delta q_{n+1} \right\Vert &= \left\Vert \Pi_{1,n} \Delta q_n \right\Vert + \mathcal{O}\left( h \left\Vert \Delta p_n \right\Vert + h^{m + 2} \left\Vert \Delta \lambda_n \right\Vert\right)\\
\left\Vert \Pi_{2,n+1} \Delta p_{n+1} \right\Vert &= \left\Vert \Pi_{2,n} \Delta p_n \right\Vert + \mathcal{O}\left( h\left\Vert \Delta q_n \right\Vert + h^{m + 2} \left\Vert \Delta \lambda_n \right\Vert\right)\\
\left\Vert \mathcal{R}_A(\infty) \Delta \lambda_{n+1} \right\Vert &= \left\Vert \mathcal{R}_A(\infty) \right\Vert^2 \left\Vert \Delta \lambda_{n} \right\Vert + \mathcal{O}\left( \left\Vert \Delta q_n \right\Vert + \left\Vert \Delta p_n \right\Vert + h \left\Vert \Delta \lambda_n \right\Vert \right)
\end{align*}

Thus the error estimates become:
\begin{align*}
\left\Vert \Delta q_{n} \right\Vert &\leq C_q \left( \left\Vert \Delta q_0 \right\Vert + h \left\Vert \Delta p_0 \right\Vert + h^{m + 2} \left\Vert \Delta \lambda_0 \right\Vert\right)\\
\left\Vert \Delta p_{n} \right\Vert &\leq C_p \left( \left\Vert \Pi_{1,0} \Delta q_0 \right\Vert + \left\Vert \Pi_{2,0} \Delta p_0 \right\Vert + h^{m + 2} \left\Vert \Delta \lambda_0 \right\Vert\right)\\
\left\Vert \Delta \lambda_{n} \right\Vert &\leq C_{\lambda} \left( \left\Vert \mathcal{R}_A(\infty) \right\Vert^n \left\Vert \Delta \lambda_{0} \right\Vert + \left\Vert \Delta q_0 \right\Vert + \left\Vert \Delta p_0 \right\Vert + h \left\Vert \Delta \lambda_0 \right\Vert \right)
\end{align*}

Proceeding as in \cite{HaLuRo89} to use the Lady Windermere's Fan construction and using the results from theorem \ref{thm:local_error} for $\delta q_h(t_k), \delta p_h(t_k)$, and the results we derived for $\delta \lambda_h(t_k)$, with $m = \min(q - 1, r, p - q, p - r)$ for $-1 \leq R(\infty) < 1$ as well as $m = \min(q - 2, r, p - q, p - r)$ for $R(\infty) = 1$, we find the global error by addition of local errors, which gives the result we were looking for.
\end{proof}

\begin{corollary}
\label{cor:global_error_Lobatto}
The global error for the Lobatto IIIA-B method applied to the IVP posed by the partitioned differential-algebraic system of eqs.\eqref{eq:partitioned_system} is:
\begin{subequations}
	\label{eq:Lobatto_error_components}
	\begin{alignat}{1}
		q_n - q(t_n) &= \mathcal{O}(h^{\min(2 s - 2)}),\label{eq:Lobatto_error_q}\\
		p_n - p(t_n) &= \mathcal{O}(h^{\min(2 s - 2)}),\label{eq:Lobatto_error_p}\\
		z_n - z(t_n) &= \left\lbrace
			\begin{array}{rl}
				\mathcal{O}(h^{s}) & \text{if $s$ even},\\
				\mathcal{O}(h^{s - 1}) & \text{if $s$ odd}.
			\end{array} \right.\label{eq:Lobatto_error_lambda}
	\end{alignat}
\end{subequations}
\end{corollary}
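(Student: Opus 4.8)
The plan is to obtain the corollary as a direct specialization of Theorem \ref{thm:global_error}, so that the whole argument reduces to inserting the known simplifying-assumption orders of the Lobatto IIIA-B pair and determining the value of the stability function at infinity. First I would record the parameters listed earlier in the paper for an $s$-stage Lobatto IIIA-B method, namely $p = 2s-2$, $q = s$ and $r = s-2$, and check that the structural hypotheses \ref{itm:H1}, \ref{itm:H2}, \ref{itm:H3} together with $B(p)$, $C(q)$, $D(r)$ all hold for $A$; these are immediate since the Lobatto IIIA method is stiffly accurate ($a_{sj} = b_j$, $c_1 = 0$) and its reduced matrix $\tilde A$ is invertible by \ref{itm:H2}.

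Next I would substitute these values into the three error exponents. For the $q$-component, $\min(p, q+r+1) = \min(2s-2, 2s-1) = 2s-2$, and for the $p$-component, $\min(p, 2q, q+r) = \min(2s-2, 2s, 2s-2) = 2s-2$; both reproduce \eqref{eq:Lobatto_error_q} and \eqref{eq:Lobatto_error_p} with no optimization left to perform. The somewhat odd notation $\mathcal{O}(h^{\min(2s-2)})$ in the statement is then just $\mathcal{O}(h^{2s-2})$.

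The only substantive step is pinning down $\mathcal{R}_A(\infty)$ for the Lobatto IIIA method. Since the method satisfies \ref{itm:H3}, its stability function is $\mathcal{R}_A(z) = e_s^T (\mathrm{Id} - z A)^{-1} \mathds{1}$, and hypothesis \ref{itm:H1} gives $\det(\mathrm{Id} - zA) = \det(I_{s-1} - z\tilde A)$, a polynomial of degree $s-1$ with leading coefficient $(-1)^{s-1}\det\tilde A$. Matching this against the order $2s-2$ of the method (which forces numerator and denominator degrees $s-1$) identifies $\mathcal{R}_A$ with the diagonal $(s-1,s-1)$ Pad\'e approximant of the exponential, whence $\mathcal{R}_A(\infty) = (-1)^{s-1}$. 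In particular $\Vert\mathcal{R}_A(\infty)\Vert = 1$ and $q = s \geq 1$, so the stability hypotheses of Theorem \ref{thm:global_error} are met.

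Finally I would feed this sign into the case split of \eqref{eq:global_error_lambda}: for $s$ even, $s-1$ is odd and $\mathcal{R}_A(\infty) = -1 \in [-1,1)$, giving $z$-order $q = s$; for $s$ odd, $s-1$ is even and $\mathcal{R}_A(\infty) = 1$, giving $z$-order $q - 1 = s - 1$. These are precisely the two branches of \eqref{eq:Lobatto_error_lambda}. The main obstacle is really only the identification of $\mathcal{R}_A(\infty)$, since everything else is arithmetic on $\min$-expressions; once that sign is settled, the corollary follows by direct substitution into Theorem \ref{thm:global_error}.
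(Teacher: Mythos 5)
Your proposal is correct and follows essentially the same route as the paper, which likewise proves the corollary by substituting $p = 2s-2$, $q = s$, $r = s-2$ and $\mathcal{R}_A(\infty) = (-1)^{s-1}$ into Theorem \ref{thm:global_error}. The only difference is that you additionally justify the value $\mathcal{R}_A(\infty) = (-1)^{s-1}$ via the Pad\'e-approximant identification, whereas the paper simply asserts it; this is a harmless (and welcome) elaboration rather than a different argument.
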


\begin{proof}
To prove this it suffices to substitute $p = 2 s - 2$, $q = s$, $r = s - 2$ and $\mathcal{R}_A(\infty) = (-1)^{s - 1}$ in the former theorem.
\end{proof}

\section{Conclusion}
In this paper we have proposed a new numerical scheme for partitioned index 2 DAEs proving its order.  The method opens the possibility to construct high-order methods for nonholonomic systems in a systematic way, preserving the nonholonomic constraints exactly. So far, the methods to numerically integrate a given nonholonomic system were constructed using discrete gradient techniques or modifications of variational integrators based on discrete versions of the Lagrange-d'Alembert's principle. Integrators in the latter category, in which our method falls, tend to display a certain amount of arbitrariness or awkwardness, particularly in the way constraints are discretized or imposed. In most cases, with the exception of SPARK methods \cite{Jay09}, the resulting methods are limited to low order unless composition is applied, and without a general framework for error analysis. However, our method offers a clear and natural way to construct them to arbitrary order. Further considerations about our construction, particularly with respect to its interpretation will be left for \cite{NonholonomicMartinSato18}.

\section*{Acknowledgements}
The author has been partially supported by Ministerio de Ciencia e Innovaci\'on  (MICINN, Spain) under grants MTM 2013-42870-P, MTM 2015-64166-C2-2P, MTM2016-76702-P and ``Severo Ochoa Programme for Centres of Excellence'' in R\&D (SEV-2015-0554). The author thanks MICINN for an FPI grant. Thanks also David Mart{\'\i}n de Diego for helpful comments and the Geometry, Mechanics and Control Network fot its support.

\bibliography{bib_NonholonomicNumAnalysis}
\bibliographystyle{plain}

\end{document}